\newcommand{\ZZ}{\mathbb{Z}}
\newcommand{\RR}{\mathbb{R}}
\newcommand{\CC}{\mathbb{C}}
\newcommand{\N}{\mathbb{N}}
\newcommand{\norm}[1]{\left\lVert#1\right\rVert}
\newcommand{\eps}{\varepsilon}
\newtheorem{thm}{\bf Theorem}
\newtheorem{theorem}{Theorem}[section]
\newtheorem{proposition}[theorem]{Proposition}
\newtheorem{lem}[thm]{\bf Lemma}
\newtheorem{lemma}[theorem]{Lemma}
\theoremstyle{remark}
\newtheorem*{remark}{Remark}
\theoremstyle{definition}
\newtheorem{definition}[theorem]{Definition}
\title{Fourier dimension of constant rank hypersurfaces}
\author{Junjie Zhu}
\newcommand{\Addresses}{{
  \bigskip
  \footnotesize

  \textsc{Department of Mathematics, 1984 Mathematics Road, University of British Columbia, Vancouver, BC Canada V6T 1Z2}\par\nopagebreak
  \textit{E-mail address}: \texttt{jzhu@math.ubc.ca}
}}
\date{\today}
\keywords{
Fourier dimension, Hausdorff dimension, Constant rank hypersurfaces, Oscillatory integrals, Stationary phase}
\subjclass[2020]{42B10, 42B20, 28A12, 53A07.}
\begin{document}

\begin{abstract}
     Any hypersurface in $\RR^{d+1}$ has a Hausdorff dimension of $d$. However, the Fourier dimension depends on the finer geometric properties of the hypersurface. For example, the Fourier dimension of a hyperplane is 0, and the Fourier dimension of a hypersurface with non-vanishing Gaussian curvature is $d$. Recently, Harris showed that the Euclidean light cone in $\RR^{d+1}$ has a Fourier dimension of $d-1$, which leads one to conjecture that the Fourier dimension of a hypersurface equals the number of non-vanishing principal curvatures. We prove this conjecture for all constant rank hypersurfaces. Our method involves substantial generalizations of Harris's strategy.
\end{abstract}

\maketitle

\section{Introduction}

The decay rate of the Fourier transforms of measures supported on manifolds has been of central interest in harmonic analysis. The Fourier transform of the normalized surface measure $\sigma$ on the unit sphere $\mathbb{S}^{d} \subset \RR^{d+1}$ is given by
$$\widehat{\sigma}(\xi) = c_d |\xi|^{\frac{1-d}{2}}J_{\frac{d-1}{2}}(2\pi |\xi|),$$
where $J_{\frac{d-1}{2}}$ is the Bessel function of integral order $\frac{d-1}{2}$, and $c_d$ is the normalizing constant \cite{Stein1993}. It satisfies 
\begin{equation}
\label{e_sphere}
 |\widehat{\sigma}(\xi)| \leq C_{d} |\xi|^{-\frac{d}{2}}   \text{ for a } C_d > 0.
\end{equation}

Describing sets in $\RR^{d+1}$, which may not be manifolds, using the Fourier decay of measures supported on them is one motivation for studying the \textit{Fourier dimension}. Let $\mathcal{M}(A)$ be the set of measures $\mu$ supported on $A$ with finite total mass $\norm{\mu}_{1} := \mu(A) < \infty$. The \textit{Fourier transform} of a measure $\mu$ at $\xi \in \RR^{n}$ is defined as $\widehat{\mu}(\xi) := \int e^{-2\pi i x \xi} d\mu(x)$.
The \textit{Fourier dimension} for a Borel $A \subset \RR^{d+1}$ is defined as 
$$\dim_F(A) := \sup \{s \in [0, d+1]: \exists \mu \in \mathcal{M}(A), \sup_{\xi \in \RR^{d+1}} |\xi|^{\frac{s}{2}}|\widehat{\mu}(\xi)| < \infty  \}.$$

The notion of the Fourier dimension is ubiquitous in harmonic analysis and geometric measure theory. It provides a lower bound on the \textit{Hausdorff dimension}, which is discussed further in Section \ref{hf_dim}.
Studies of Fourier dimensions include probabilistic \cite{Salem, Kahane, Bluhm, LP} and deterministic constructions \cite{Kaufman, Fraser_Hambrook} of \textit{Salem sets} with equal Fourier and Hausdorff dimensions and properties satisfied by any Borel sets of large Fourier dimensions \cite{liang}.

The objective of this paper is to study the case where $A$ is a constant rank hypersurface and compute its Fourier dimension.

\subsection{Constant rank  hypersurfaces}

Let $M\subset \RR^{d+1} $ be an orientable smooth hypersurface, which is a topological manifold of dimension $d$ with a normal direction $N: M \to \mathbb{S}^{d}$. A way to describe $M$ is through notions of curvatures, defined through the eigenvalues of the \textit{Weingarten map} \cite{edg}. 
Let $T_pM$ be the tangent space of $M$ at $p \in M$. The \textit{Weingarten map} $L_p: T_p M \to T_p M$ at $p \in M$ is the linear map $L_p(v) = -D_v N = -\frac{d}{dt} (N \circ \gamma) (0)$, where $\gamma: I \to M$ is a curve with $\gamma(0)=p$, $\gamma'(0)=v$. The \textit{principal curvatures} of $M$ at $p$ are the eigenvalues of the map $L_p$, and the \textit{Gaussian curvature} of $M$ at $p$ is the product of the eigenvalues, which equals the determinant of $L_p$. We note that Gaussian and principal curvatures at $p \in M$ are independent of the parametrization of $M$ and the choice of a basis for $T_p M$. The Weingarten map is self-adjoint, so the eigenvalues of $L_p$ are real.

In this manuscript, we focus on \textit{hypersurfaces of constant rank} $k$ where at all points $p \in M$, $k$ principal curvatures are non-zero, and $d-k$ principal curvatures are zero. For example, in addition to cones and cylinders, tangent surfaces are hypersurfaces of constant rank $1$ in $\RR^3$. Two tangent surfaces are shown in Section \ref{sec_ub_fd_ex}. 
A hypersurface in higher dimensions not classified as a cone or a cylinder is $$\left\{\left.\gamma(t)+\sum_{j=1}^{d-1} v_j \frac{d^j \gamma}{dt^j}(t) \right\rvert t, v_j \in \RR\right\} \subset \RR^{d+1},$$
where $\gamma(t) = (t, t^2, \cdots, t^d, t^{d+1})$. It is of constant rank $1$ that generalizes tangent surfaces in $\RR^3$ to higher dimensions. Another hypersurface of constant rank $2$ in $\RR^{4}$ is 
$$\left\{ (t, \sin(t)e^{-s}, \sin(2t)e^{-4s}, \sin(3t)e^{-9s})+ v  (1, \cos(t)e^{-s}, 2\cos(2t)e^{-4s}, 3\cos(3t)e^{-9s}) | t, s, v \in \RR\right\}.$$
Some more examples are cylinders of cones $\{(kv, k, h) | k, h \in \RR, v \in S \} \subset \RR^{d+1}$, where $S$ is a hypersurface in $\RR^{d-1}$ with non-vanishing Gaussian curvature.

Although \textit{surfaces of constant rank} have been studied extensively in differential geometry \cite{Ushakov}, they are less examined in geometric measure theory, where geometric objects are analyzed via measures supported on them.

\subsection{Main Result}

This manuscript focuses on computing $\dim_F(M)$, interpreting the properties of $M$ that $\dim_F(M)$ captures, and differentiating between topological, Hausdorff, and Fourier dimensions on $M$. Since the Hausdorff dimension is preserved under diffeomorphisms, which are bi-Lipschitz, $\dim_H(M) = d$ \cite[Proposition 3.1]{falconer}. 

By (\ref{e_sphere}), the unit sphere $\mathbb{S}^{d}$ has Fourier dimension $d$. Hlawka \cite{Hlawka} showed that hypersurfaces with non-vanishing Gaussian curvature also have a Fourier dimension of $d$, which suggests that the Fourier dimension does not depend on the symmetry of the hypersurface. Harris \cite{Fraser_2022} showed that the Euclidean light cone in $\RR^{d+1}$, which is a hypersurface with zero Gaussian curvature but of constant rank $d-1$, has a Fourier dimension of $d-1$. The proof relies on the rotational symmetry of the light cone. The author's previous work \cite{jz} adapts the proof of \cite{Fraser_2022} and shows that cones and cylinders of rank $d-1$ without rotational symmetry also have a Fourier dimension of $d-1$. Table \ref{table:1} summarizes these results.

\begin{table}[h!]
\centering
\begin{tabular}{c | c| c| c} 
 
 Hypersurface $M$ & Constant Rank & $\dim_F(M)$ & Reference \\ [0.5ex] 
 \hline
 non-vanishing Gaussian curvature ($\mathbb{S}^{d}$, etc) & $d$ & $d$ & \cite{Hlawka} \\ [0.5ex] 
 \hline
 Light cone $ \{ h(x, 1) | x \in \mathbb{S}^{d-1}, h \in \RR \}$ & $d-1$ & $d-1$ & \cite{Fraser_2022} \\ [0.5ex]
 \hline
 Generalized cone $ \{h(x, 1) |x \in S, h \in \RR \}$ & $d-1$ & $d-1$ & \cite{jz} \\ [0.5ex]
 \hline
 $(d-1)$-cylinder $S \times \RR$ & $d-1$ & $d-1$ & \cite{jz} \\
 \hline
 Hyperplane $\{x: x_{d+1}=0\}$& $0$ & $0$ &  \\
\end{tabular}
\caption{Fourier dimension of some hypersurfaces, where $S \subset \RR^{d}$ is a hypersurface with non-zero Gaussian curvature.}
\label{table:1}
\end{table}

Previous results suggest that the Fourier dimension of $M$ equals its rank. In this manuscript, we confirm that such a hypothesis holds

\begin{theorem}
\label{t_main}
    Let $M$ be a smooth hypersurface of constant rank $k$ in $\RR^{d+1}$, then $\dim_F(M) = k$.
\end{theorem}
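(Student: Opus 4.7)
The proof naturally splits into the upper bound $\dim_F(M) \leq k$ and the lower bound $\dim_F(M) \geq k$. Both exploit the standard structure theorem for constant-rank hypersurfaces: the null distribution (kernel of the Weingarten map) is integrable, and each leaf is an open portion of a $(d-k)$-dimensional affine subspace of $\RR^{d+1}$. Consequently, $M$ can be parametrized locally by a map $F : \Omega \times V \to M$, with $\Omega \subset \RR^k$ and $V \subset \RR^{d-k}$, so that $v \mapsto F(u, v) = F_0(u) + \sum_{j=1}^{d-k} v_j\, e_j(u)$ is an affine embedding onto a piece of the flat leaf $\Pi_u$, while $u$ parametrizes the ``curved'' direction of $M$.

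The upper bound $\dim_F(M) \leq k$ reduces, via this parametrization, to the trivial inequality $\dim_F \leq \dim_H$ on a $k$-dimensional transverse slice. Given $\mu \in \mathcal{M}(M)$ supported in a single chart, form the averaged measure $\nu$ on the slice $\Sigma_0 := \{F_0(u) : u \in \Omega\}$ with density $u \mapsto \int \psi(u, v)\, dv$, where $\psi$ is the density of $\mu$ in the chart. For frequencies $\xi$ in a narrow cone around the normal direction to $\Pi_{u_0}$ at a chosen base point $u_0$, the quantities $\xi \cdot e_j(u)$ are uniformly small in the chart, so $\widehat{\mu}(\xi) \approx \widehat{\nu}(\xi)$. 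Since $\dim_H(\Sigma_0) = k$, any Fourier decay rate for $\widehat{\mu}$ transfers to $\widehat{\nu}$ and is bounded by $|\xi|^{-k/2}$. Passing to arbitrary $\mu$ via a partition of unity and a localization near a point of positive $\mu$-mass completes the direction.

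For the lower bound we adapt Harris's strategy. Construct $\mu$ with a smooth bump density $\psi$ in the $(u, v)$ coordinates, so that
\begin{equation*}
\widehat{\mu}(\xi) = \int \psi(u, v)\, e^{-2\pi i \bigl[\xi \cdot F_0(u) + \sum_{j} v_j (\xi \cdot e_j(u))\bigr]}\, du\, dv.
\end{equation*}
Partition frequency space into two regions. If $\max_j |\xi \cdot e_j(u)| \gtrsim |\xi|^{1 - \delta}$ uniformly in $u$ over the support, repeated integration by parts in $v$ yields arbitrarily fast decay. Otherwise $\xi$ is essentially normal to every $\Pi_u$ hit by the support; the $v$-integration then contributes only a bounded factor, and the surviving $u$-integral is an oscillatory integral on $\RR^k$ with phase $\xi \cdot F_0(u)$ whose Hessian, properly normalized, is the second fundamental form of $M$ in the direction of $\xi$. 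The constant-rank hypothesis guarantees that this form has rank exactly $k$, so non-degenerate stationary phase in $\RR^k$ yields the target bound $|\widehat{\mu}(\xi)| \lesssim |\xi|^{-k/2}$.

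The main technical obstacle is executing the stationary phase analysis uniformly in the direction of $\xi$ and in the base point. In Harris's setting, rotational symmetry collapses the calculation to a single Bessel-type integral, whereas here the null foliation, the affine embedding $v \mapsto F(u,v)$, and the transverse curvature can vary intricately with $u$. One must translate the geometric constant-rank condition into a quantitative rank-$k$ non-degeneracy of the Hessian of $\xi \cdot F_0(u)$, uniformly in $u$ and in the angular parameter of $\xi$, and partition frequency space so that the ``tangential'' and ``normal'' regimes overlap controllably in the intermediate range $|\xi \cdot e_j(u)| \sim |\xi|^{1-\delta}$. Patching local estimates globally via a partition of unity, and extending the upper bound to arbitrary (not necessarily absolutely continuous) $\mu \in \mathcal{M}(M)$, will require additional bookkeeping but is standard once the core oscillatory integral analysis is in hand.
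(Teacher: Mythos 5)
The lower-bound half of your proposal is essentially a direct proof of Littman's theorem (which the paper simply cites as Theorem A): construct a bump measure in the $(u,v)$ chart, integrate by parts in $v$ away from the co-normal cone, and run non-degenerate stationary phase in $u$ where the transverse Hessian has full rank $k$. That part is sound and in line with standard oscillatory-integral technique.

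The upper-bound half has a genuine gap, and it is precisely the gap the paper was written to close. You project $\mu$ onto the $k$-dimensional slice $\Sigma_0 = \{F_0(u)\}$ and claim that $\widehat{\mu}(\xi) \approx \widehat{\nu}(\xi)$ for $\xi$ in a narrow cone around the normal $N(u_0)$ at a single base point, because $\xi\cdot e_j(u)$ is ``uniformly small.'' But the null directions $e_j(u)$ rotate as $u$ moves, so $\xi\cdot e_j(u)$ is of size $\rho\,|u-u_0|$ when $|\xi|=\rho$; the error $\widehat{\mu}(\xi)-\widehat{\nu}(\xi)$ is controlled only on a $\rho$-dependent shrinking neighborhood of $u_0$, while $\mu$ and $\nu$ are fixed. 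The approximation holds for all frequencies in a fixed cone only when the rulings are parallel, i.e.\ only for cylinders, which is exactly why the prior arguments of Harris and the paper's predecessor \cite{jz} did not extend beyond cones and cylinders. A secondary but real issue is that you assume $\mu$ has a density $\psi$ in the chart; a general $\mu\in\mathcal{M}(M)$ need not be absolutely continuous, and a partition of unity does not repair this. The paper's actual mechanism is different in kind: for each $s$ it builds a shear $T_s$ satisfying $T_s(\Phi(u,v))\cdot e_{d+1}=(\Phi(u,v)-\alpha(s))\cdot N(s)$, pushes $\mu$ forward by $T_s$, and averages in $s$. The identity $\widehat{T_s^{\#}\mu}(\rho e_{d+1}) = e^{2\pi i\rho\,\alpha(s)\cdot N(s)}\,\widehat{\mu}(\rho N(s))$ transfers the hypothesized decay of $\widehat{\mu}$ to the averaged measure $\nu$ in a single fixed direction, without ever requiring $\nu$ to live on $M$ or on a slice; stationary phase in $s$ (not in $u$, and with the Hessian equal to the rank-$k$ second fundamental form) then forces $|\widehat{\nu}(\rho e_{d+1})|\gtrsim\rho^{-k/2}$, hence $\beta\le k$. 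The observation that $\nu$ need not be supported on $M$ is the new idea you would need, and it has no analogue in your proposal.
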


\begin{remark}
    \begin{enumerate}[a.]
        \item Theorem \ref{t_main} shows a direct link between the principal curvatures and the Fourier dimension for constant rank hypersurfaces. We can view the Fourier dimension as a generalization of the number of non-zero principal curvatures defined on hypersurfaces. It is a generalization of the results in \cite{Fraser_2022, jz}. It includes all hypersurfaces of constant rank $k$ for $k < d-1$ and hypersurfaces of constant rank $d-1$ that are neither cones nor cylinders. 
        \item Previously known bounds on the Fourier dimension of a constant rank $k$ hypersurface $M \subset \RR^{d+1}$ are
        $$k \leq \dim_F(M) \leq k\left(\frac{d+1}{k+1}\right).$$
          The lower bound from \cite{Littman} arises from the Fourier transform of measures supported on the hypersurface induced by the Lebesgue measure. The upper bound is obtained from the study of the $p$-thin problem \cite{gkh}. If $\mu \in \mathcal{M}(M)$ satisfies $|\widehat{\mu}(\xi)| \leq C(1+|\xi|)^{-\frac{\alpha}{2}}$ for $C, \alpha > 0$, $\widehat{\mu} \in L^{p}(\RR^{d+1})$ for all $p > \frac{2(d+1)}{\alpha}$. Such $p$ must satisfy $p > \frac{2(k+1)}{k}$, so $\alpha \leq k\left(\frac{d+1}{k+1}\right)$.
    \end{enumerate}
\end{remark}

\subsection{Overview of the proof}

To establish Theorem \ref{t_main}, we need to show that the Fourier dimension of a hypersurface $M$ of constant rank $k$ is bounded below and above by $k$. For the upper bound (Proposition \ref{p_fd_leq_k}), the main strategy adapted from \cite{Fraser_2022} is to create a new measure by averaging a series of push-forward of an old measure supported on $M$.

An adaptation of the methodology from \cite{Fraser_2022} is needed. In previous works \cite{Fraser_2022, jz}, it was believed that the specific maps used to push-forward measures are required to map $M$ to itself, so the new measure is still supported on the hypersurface. While such assumptions pose no issue for cones and cylinders, similar maps may not exist for more general hypersurfaces for us to apply the same machinery. 

Fortunately, we note that the new measure does not have to be supported on the original $M$. We propose new maps that push forward an old measure to a series of similar hypersurfaces, and we still create a new measure as the average of the series of measures. The proof in Section \ref{sec764} shows that such a new measure may not be supported on $M$, but it has a Fourier decay exponent in one direction that is not less than that of the old measure. Moreover, the Fourier transform of the new measure is related to the second fundamental form of the hypersurface $M$, which prevents the Fourier decay exponent of the new measure from being greater than $k/2$. 

\subsection{Proof of the main result and organization of the manuscript}

Theorem \ref{t_main} follows from Theorem \ref{t33} and Proposition \ref{p_fd_leq_k}.

\begin{thm}[\cite{Littman}]
\label{t33}
    Suppose that at least $k$ of the principal curvatures are not zero at all points in $M$. Then, there exists a measure $\mu \in \mathcal{M}(M)$ such that $\sup_{\xi \in \RR^{d+1}}|\xi|^{\frac{k}{2}}|\widehat{\mu}(\xi)| < \infty$.
\end{thm}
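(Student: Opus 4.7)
The plan is to realize $\widehat{\mu}(\xi)$ as an oscillatory integral and then apply the method of stationary phase in the directions where the phase is non-degenerate. First, by a smooth partition of unity on $M$, it suffices to produce the decay bound when $\mu$ is supported in a single coordinate patch. In such a patch, after a rigid motion, we may write $M$ locally as a graph $x_{d+1}=\phi(x')$ with $\phi\in C^{\infty}$ on a neighborhood $U\subset\RR^{d}$ of the origin, and take $d\mu(x',\phi(x'))=a(x')\,dx'$ with $a\in C_{c}^{\infty}(U)$. The Hessian $\operatorname{Hess}\phi(x')$ is (up to a positive smooth factor) a matrix representation of the second fundamental form, so it has rank at least $k$ at every point of $\overline{\operatorname{supp} a}$, shrinking $U$ if necessary. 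The Fourier transform becomes
\[
\widehat{\mu}(\xi)=\int_{U}e^{-2\pi i(x'\cdot \xi'+\phi(x')\xi_{d+1})}a(x')\,dx',
\]
and writing $\xi=\lambda\omega$ with $|\omega|=1$ reduces the claim to showing $|\widehat{\mu}(\lambda\omega)|\lesssim \lambda^{-k/2}$ uniformly in $\omega\in\mathbb{S}^{d}$, for the phase $\Phi_{\omega}(x')=x'\cdot \omega'+\phi(x')\omega_{d+1}$.

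Next, I would split into two regimes via a partition of unity on the sphere of directions. Fix a small $\delta>0$. In the \emph{tangential} regime $|\omega_{d+1}|<\delta$, we have $|\omega'|\ge\sqrt{1-\delta^{2}}$, so $\nabla\Phi_{\omega}(x')=\omega'+\omega_{d+1}\nabla\phi(x')$ is bounded below uniformly on $\operatorname{supp} a$ once $\delta$ is small. Repeated integration by parts (non-stationary phase) then yields $O(\lambda^{-N})$ for every $N$, which is more than enough. In the \emph{transverse} regime $|\omega_{d+1}|\ge\delta$, the Hessian satisfies $\operatorname{Hess}\Phi_{\omega}=\omega_{d+1}\operatorname{Hess}\phi$, so it has rank at least $k$ with a uniform lower bound on its $k$ largest singular values.

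To exploit this, I would apply a local normal-form argument. For each point $x_{0}'\in\operatorname{supp}a$, diagonalize $\operatorname{Hess}\phi(x_{0}')$ and, using the implicit function theorem, introduce smooth local coordinates $(y,z)\in\RR^{k}\times\RR^{d-k}$ in which the $y$-Hessian of $\Phi_{\omega}$ is non-degenerate and depends smoothly on $z$ and $\omega$ (with $\omega$ ranging over the compact set $|\omega_{d+1}|\ge\delta$). Then perform the $y$-integral first and invoke the quantitative stationary phase estimate with parameters (cf.\ H\"ormander's lemma on oscillatory integrals with non-degenerate critical points) to obtain
\[
\left|\int_{\RR^{k}}e^{-2\pi i\lambda\Phi_{\omega}(y,z)}\widetilde{a}(y,z)\,dy\right|\lesssim \lambda^{-k/2}
\]
uniformly in $z$ and $\omega$. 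Integrating in $z$ over a compact set preserves the bound, and summing over the finitely many patches from the partition of unity completes the proof.

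The main obstacle is \emph{uniformity}: the constant in the stationary phase estimate must be controlled uniformly in the parameter $z$ and in the direction $\omega$, and transitions between the tangential and transverse regimes must be stitched together without loss. This is handled by a standard compactness argument: the assumption that at least $k$ principal curvatures are nonzero at \emph{every} point of $M$ gives a uniform lower bound on the relevant $k\times k$ minors of $\operatorname{Hess}\phi$ on the compact set $\overline{\operatorname{supp}a}$, so the coordinate changes supplied by the implicit function theorem have derivatives bounded uniformly, and the stationary phase lemma applies with a single constant depending only on finitely many derivatives of $\phi$ and $a$.
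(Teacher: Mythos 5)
Your argument is correct and is the standard stationary-phase proof of this theorem, which is what the paper cites from Littman: localize to a graph patch, partition the frequency sphere into a tangential cone (treated by repeated non-stationary-phase integration by parts) and a transverse cone (treated by a $k$-dimensional stationary phase with parameters after selecting $k$ coordinates in which the Hessian block is uniformly non-degenerate). One minor slip worth noting: the $(y,z)$-splitting is supplied by a linear orthogonal diagonalization of $\operatorname{Hess}\phi(x_0')$ together with continuity on a small patch, not by the implicit function theorem; the IFT enters later, to track the moving $y$-critical point as a function of $(z,\omega)$ inside the parametric stationary-phase estimate, whose quadratic normal form the paper attributes to Morse's Lemma (Lemma~\ref{lem_morse}) and whose uniform constants require exactly the compactness control you describe.
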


Therefore, if $M$ is a constant rank $k$ hypersurface, $\dim_F(M) \geq k$. It remains to prove the following.

\begin{proposition}
\label{p_fd_leq_k}
    $\dim_F(M) \leq k$.
\end{proposition}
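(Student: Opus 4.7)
The plan is to generalize Harris's averaging argument from the light-cone case, following the strategy sketched at the end of Section~\ref{sec764}. Suppose $\mu\in\mathcal{M}(M)$ satisfies $|\widehat{\mu}(\xi)|\leq C(1+|\xi|)^{-s/2}$ for all $\xi\in\RR^{d+1}$; the goal is to show $s\leq k$. A standard partition-of-unity reduction (using that $\widehat{\chi\mu}$ inherits the decay of $\widehat{\mu}$ for any $\chi\in C_c^\infty$, which follows from splitting the convolution $\widehat{\chi\mu}=\widehat{\chi}*\widehat{\mu}$ on $|\eta|\lessgtr|\xi|/2$) localizes $\mu$ near a single point $p_0\in M$. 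After a rigid motion, place $p_0$ at the origin with tangent plane $\RR^d\times\{0\}$ and parametrize $M$ locally as the graph $\{(y,\phi(y)):y\in U\}$ with $\phi(0)=0$, $\nabla\phi(0)=0$, and $\operatorname{rank}\operatorname{Hess}\phi(y)\equiv k$ on $U$. An orthogonal change of $y$ splits $y=(y',y'')\in\RR^k\times\RR^{d-k}$ so that the block $\partial_{y'}^2\phi(0)$ is nondegenerate and the null directions of the second fundamental form at $p_0$ align with the $y''$-axes.

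Construct an affine family $\{T_t\}_{t\in\RR^{d-k}}$ of push-forward maps $T_t:\RR^{d+1}\to\RR^{d+1}$ which translate along the null $y''$-directions and apply a compensating linear modification in the $z$-coordinate, chosen so that each image $T_t(M)$ is a ``similar'' constant rank $k$ hypersurface, tangent to $M$ to first order at the shifted base point $T_t(p_0)$. Critically, the $T_t$ need not preserve $M$ itself, which is exactly the relaxation over the cone and cylinder constructions of \cite{Fraser_2022,jz}. With a nonnegative bump $\rho\in C_c^\infty(\RR^{d-k})$ of unit mass, define
$$\nu=\int_{\RR^{d-k}}(T_t)_*\mu\,\rho(t)\,dt\in\mathcal{M}(\RR^{d+1}),$$
and test $\widehat{\nu}$ along the normal ray $\xi_\lambda=\lambda N(p_0)=(0,0,\lambda)$ as $\lambda\to\infty$.

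Two opposing estimates on $\widehat{\nu}(\xi_\lambda)$ drive the proof. For the upper bound, affineness of $T_t$ gives $\widehat{(T_t)_*\mu}(\xi_\lambda)=e^{-2\pi i\lambda\gamma(t)}\widehat{\mu}(\lambda\alpha(t),\lambda\beta(t),\lambda)$ for explicit $\alpha,\beta,\gamma$ arising from the matrix of $T_t$, and the pointwise hypothesis on $\widehat{\mu}$ (applied under the integral in $t$) yields $|\widehat{\nu}(\xi_\lambda)|\leq C'\lambda^{-s/2}$; thus $\nu$ inherits the Fourier decay of $\mu$ in this direction. For the lower bound, unfolding the definitions and applying Fubini rewrites $\widehat{\nu}(\xi_\lambda)$ as an oscillatory integral over $U$ whose leading phase is $\lambda\phi(y)$, with the averaging in $t$ supplying an amplitude that regularizes the lifted measure $\tilde\mu$ along the null $y''$-directions. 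The rank-$k$ nondegeneracy of $\operatorname{Hess}\phi$ in the $y'$-variables (equivalently, of the second fundamental form of $M$ at $p_0$) then feeds into stationary phase in the $y'$-variables and produces a non-cancelling leading contribution of size $\lambda^{-k/2}$, giving $|\widehat{\nu}(\xi_\lambda)|\gtrsim\lambda^{-k/2}$. Comparing the two estimates forces $s\leq k$.

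The main obstacle is the lower-bound stationary-phase step. Since $\mu$ is only a finite measure with no assumed regularity, classical stationary phase does not apply to $\mu$ directly; the averaging in $t$ must simultaneously (i) preserve the $\lambda^{-s/2}$ inherited bound of $\widehat{\nu}$ in the normal direction, and (ii) supply just enough smoothing in the null $y''$-directions for the stationary-phase analysis to yield the $\lambda^{-k/2}$ lower bound without cancellation of the leading term. Engineering the correction $\ell_t$ in $T_t$ and the bump $\rho$ so that both requirements are met, using only the approximate (rather than exact) self-similarity available for a general constant rank hypersurface, is the technical heart of the argument and the point at which the present proof extends beyond \cite{Fraser_2022,jz}.
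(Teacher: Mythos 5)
Your high-level outline matches the paper's: build a family of push-forward maps $T$ that need not preserve $M$, average the pushed-forward copies of $\mu$ against a bump, test $\widehat{\nu}$ along the normal at the base point, and then squeeze between an inherited upper bound and a stationary-phase lower bound. The key relaxation (that $T$ only needs to send $M$ to a ``similar'' constant-rank hypersurface, not to itself) is also correctly identified. However, the plan for the lower bound has the averaging direction and the stationary-phase variable backwards, and as sketched it would not close.

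You average over $t\in\RR^{d-k}$, translating along the \emph{null} $y''$-directions of the second fundamental form, and then propose to run stationary phase in the curved variables $y'$, hoping that the $t$-averaging supplies enough regularity. This cannot work as stated for two reasons. First, averaging over translations in $y''$ only mollifies the measure in the $y''$-directions; it does nothing to the $y'$-marginal of $\mu$, so the oscillatory integral in $y'$ is still taken against a completely arbitrary finite measure, and the stationary-phase expansion is unavailable. Second, if one instead tries to run stationary phase in $t$ (the only variable that comes with a smooth amplitude $\rho$), the phase is essentially linear in $t$ for your affine $T_t$, so the inner $t$-integral either decays rapidly in $\lambda$ (when the linear coefficient is nonzero) or is $\lambda$-independent (when it vanishes); neither produces the $\lambda^{-k/2}$ leading term.

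The paper does the opposite. Using Hartman's lemma, $M$ is written in ruled form $\Phi(u,v)=\alpha(u)+\sum_l v_l w_l(u)$ with $u\in U\subset\RR^{k}$ the curved directions and $v\in V\subset\RR^{d-k}$ the rulings, and the maps $T_s$ are indexed by $s\in U\subset\RR^{k}$ (the \emph{curved} directions), defined so that $T_s(\Phi(u,v))\cdot e_{d+1}=(\Phi(u,v)-\alpha(s))\cdot N(s)$; one averages with $\psi\in C_c^\infty(\RR^{k})$. After Fubini,
\[
\widehat{\nu}(\rho e_{d+1})=\int\!\Bigl(\int e^{-2\pi i\rho\,T_s(\Phi(u,v))\cdot e_{d+1}}\psi(s)\,ds\Bigr)\,d\mu\circ\Phi(u,v),
\]
and stationary phase is applied to the \emph{inner} integral in $s$, which has the smooth amplitude $\psi$. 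The critical point is $s=u$ and the Hessian in $s$ is exactly $-\partial_{u_{j'}}\Phi\cdot\partial_{u_j}N$, which the rank-$k$ hypothesis makes nondegenerate of size $k$; hence the inner integral is $\rho^{-k/2}$ times a function with a fixed sign, and integrating against $d\mu$ gives the lower bound $|\widehat{\nu}(\rho e_{d+1})|\gtrsim\rho^{-k/2}$ without ever needing $\mu$ to be regular. That inversion---putting the measure on the \emph{outside} and the bump on the \emph{inside}, and averaging over the $\RR^{k}$ curved directions rather than the $\RR^{d-k}$ flat ones---is the missing idea in your proposal.
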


We present examples of Proposition \ref{p_fd_leq_k} in Section \ref{sec_ub_fd_ex} and prove Proposition \ref{p_fd_leq_k} in Section \ref{sec764}. 

\subsection{Notations}

For two functions $f, g: D \to \RR_{\geq 0}$ with a domain $D$, we write $f \lesssim g$ to denote that there exists a $c>0$, such that for all $x\in D$, $f(x) \leq c g(x)$.

For $\phi: \RR^{n} \to \RR^{n}$, we denote the Jacobian of $\phi$ as ${\bf J} \phi$.

If $(X, \mathcal{X})$ and $(Y, \mathcal{Y})$ are measurable spaces and the mapping $F: X \to Y$ is $(\mathcal{X}, \mathcal{Y})$-measurable, the push-forward map $F^{\#}$ is defined to be $F^{\#}\mu(U) = \mu(F^{-1}(U))$ for any set $U \in \mathcal{Y}$ and any measure $\mu$ on $(X, \mathcal{X})$.

On $(X, \mathcal{X})$, if $f$ is a measurable function and $\mu$ is a measure, we denote $\langle f, \mu \rangle = \int_{X} f d\mu$.

\subsection{Acknowledgments}
I thank Malabika Pramanik and Jialing Zhang for their discussions on this project. I thank Yuveshen Moorogen for editing suggestions for an earlier version of the manuscript.

\section{Upper bounds on the Fourier dimension: Examples}
\label{sec_ub_fd_ex}

In this section, we demonstrate the process of showing that the Fourier dimensions of two tangent surfaces, which are hypersurfaces of constant rank $1$ in $\RR^{3}$ different from generalized cones and cylinders, are at most 1. We note that the normal vectors used in the section are not unit vectors, but the conclusion is not affected.

\subsection{Tangent surface generated by a Helix}
\label{sec_ahe}

Let $\gamma: \RR \to \RR^3$, $\gamma(t)=(t, t^2, t^3)$, and 
\begin{equation}
\label{eq31_s}
    S = \{\Phi(t, v) : = \gamma(t)+ v\gamma'(t) | t \in \RR, v \neq 0  \}
\end{equation}
 be the tangent surface generated by $\gamma$,
where $\gamma'(t) = (1, 2t, 3t^2)$. $\gamma''(t) = (0, 2, 6t)$, and a normal vector of $S$ at a point $\Phi(t, v)$ is 
\begin{equation}
\label{eq31_n}
     \overrightarrow{n}(t) := (3t^2, -3t, 1). 
\end{equation}
The tangent surface $S$ is of rank 1 since the second fundamental form at $\Phi(t, v)$ is $$\begin{pmatrix} \Phi_{tt} \cdot  \overrightarrow{n} &  \Phi_{tv} \cdot  \overrightarrow{n} \\\Phi_{vt} \cdot  \overrightarrow{n} &  \Phi_{vv} \cdot  \overrightarrow{n}\end{pmatrix}=\begin{pmatrix} 6v &  0 \\0 & 0\end{pmatrix}$$ has rank $1$ when $v \neq 0$.

\begin{figure}[h]
  \centering
   \includegraphics[width=119mm]{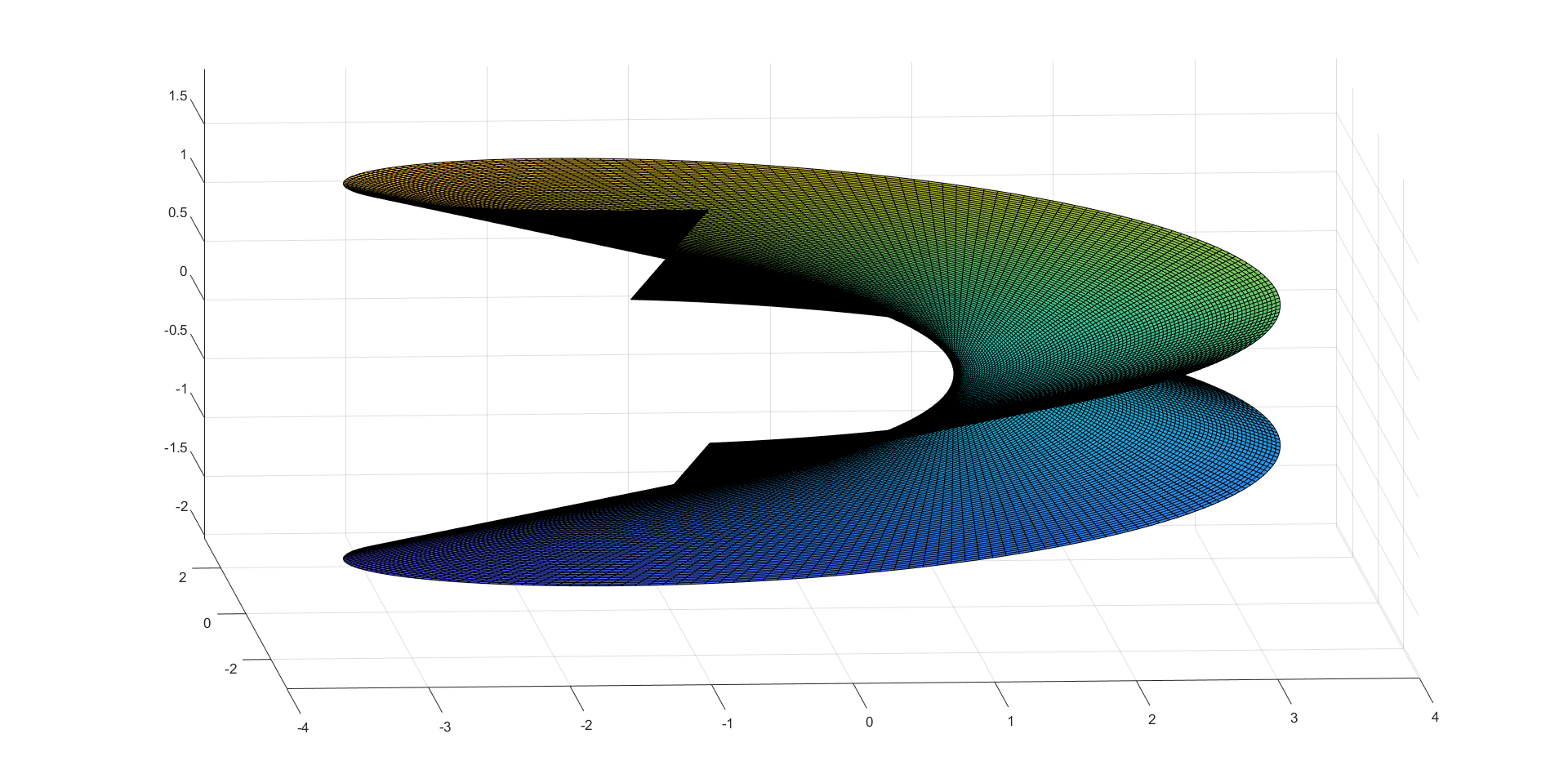}
 \caption{A 3D plot of $S$ from Matlab.}
\end{figure}


\begin{proposition}
\label{p_ahe_fd1}
    $\dim_F(S) \leq 1$.
\end{proposition}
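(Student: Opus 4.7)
The plan is to argue by contradiction, following and extending the averaging strategy of \cite{Fraser_2022, jz}. Suppose $\dim_F(S) > 1$; then there exist $\alpha > 1$ and $\mu \in \mathcal{M}(S)$ with $|\widehat{\mu}(\xi)| \lesssim (1+|\xi|)^{-\alpha/2}$. I will construct an auxiliary measure $\nu$ whose Fourier transform, evaluated along a suitable normal direction, decays at rate $\alpha$ (inherited from $\mu$) yet cannot decay faster than $\lambda^{-1/2}$ (forced by the rank-one second fundamental form). The resulting inequality $\alpha \leq 1$ will be the contradiction.

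The maps producing $\nu$ exploit an affine reparametrization identity for $\gamma(t) = (t, t^2, t^3)$. Setting
\[
A_h := \begin{pmatrix} 1 & 0 & 0 \\ 2h & 1 & 0 \\ 3h^2 & 3h & 1 \end{pmatrix}, \qquad b_h := \begin{pmatrix} h \\ h^2 \\ h^3 \end{pmatrix},
\]
one verifies directly that $\gamma(t+h) = A_h\gamma(t) + b_h$ and $\gamma'(t+h) = A_h\gamma'(t)$. Consequently the affine map $\phi_h(x) := A_h x + b_h$ satisfies $\phi_h(\Phi(t,v)) = \Phi(t+h,v)$ and in particular $\phi_h(S) \subseteq S$. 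Fix a bounded interval $I$ (say $I = [0,1]$, with a smooth cutoff replacing $\mathbf{1}_I$ if needed) and set $\nu := \int_I \phi_h^{\#}\mu\, dh$. Because $\phi_h$ is affine,
\[
\widehat{\nu}(\xi) = \int_I e^{-2\pi i\xi\cdot b_h}\,\widehat{\mu}(A_h^T\xi)\, dh.
\]

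To obtain the upper bound, I will evaluate $\widehat{\nu}$ at $\xi = \lambda(0,0,1) = \lambda\overrightarrow{n}(0)$. A short calculation gives $A_h^T\xi = \lambda\overrightarrow{n}(-h)$ and $\xi\cdot b_h = \lambda h^3$; since $|\overrightarrow{n}(-h)|$ is bounded above and below on $I$, the decay hypothesis on $\widehat{\mu}$ immediately yields $|\widehat{\nu}(\lambda \overrightarrow{n}(0))| \lesssim \lambda^{-\alpha/2}$. For the lower bound, I will use that $\nu$ remains supported on $S$ and, in the parameterization $\Phi$, its pullback $\tilde{\nu}$ equals $\tilde{\mu}*_t\mathbf{1}_I$, i.e., $\tilde{\mu}$ convolved with $\mathbf{1}_I$ in the $t$-direction only. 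Hence
\[
\widehat{\nu}(\lambda\overrightarrow{n}(0)) = \int e^{-2\pi i\lambda(s^3 + 3s^2 v)}\, d\tilde{\nu}(s,v),
\]
and the phase $s^3 + 3 s^2 v$ has stationary locus $\{s=0\}$ with transverse Hessian $\partial_s^2 = 6v$ of rank one, matching exactly the single nonzero entry of the second fundamental form at $\Phi(0,v)$. Rescaling $s = \lambda^{-1/2}\tilde s$ and applying stationary phase in $s$ (with $v$ held fixed) extracts a factor $\lambda^{-1/2}$ and reduces to a residual $v$-integral of the $\{s=0\}$-slice of $\tilde{\nu}$ weighted by $|v|^{-1/2}$. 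When that residual is nonzero, one obtains $|\widehat{\nu}(\lambda\overrightarrow{n}(0))| \gtrsim \lambda^{-1/2}$, which combines with the upper bound to force $\alpha \leq 1$.

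The principal obstacle is securing the nonvanishing of the residual $v$-integral for a general, possibly singular $\mu$. The convolution $\tilde{\mu}*_t\mathbf{1}_I$ regularizes only in $t$, so the restriction to $\{s=0\}$ has to be interpreted as a measure in $v$, and the $|v|^{-1/2}$-weighted integral could in principle vanish for pathological $\mu$. I will handle this by a localization: replacing $\overrightarrow{n}(0)$ by $\overrightarrow{n}(t_0)$ and $I$ by a small interval whose interior contains $t_0$, where $t_0$ lies in the projection of $\operatorname{supp}(\mu)$ onto the $t$-coordinate, and premultiplying $\mu$ by a smooth cutoff concentrated near a point $\Phi(t_0,v_0)$ with $v_0 \neq 0$. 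Carrying out stationary phase with a non-smooth amplitude (perhaps by replacing $\mathbf{1}_I$ by a smooth weight $\rho \in C_c^{\infty}(I)$) and controlling the weight $|v|^{-1/2}$ away from $v = 0$ within the localized support are the key technical points that I expect to require the most care.
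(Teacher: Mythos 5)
Your plan is the paper's proof in different clothing. The affine map $\phi_h$ with $\phi_h\circ\Phi(t,v)=\Phi(t+h,v)$ coincides, up to the sign of the parameter, with the paper's reparametrization $T_s$ in (\ref{eq31_TS}), so the averaged measure $\nu$ is the same object; and your picture of $\tilde\nu$ as $\tilde\mu$ smeared in $t$ is a Fubini-equivalent reordering of the paper's computation, where stationary phase is performed first in the averaging variable $s$ against the smooth bump $\psi(s)$ with $(t,v)$ frozen (so the non-smooth-amplitude issue you flag never arises once you replace $\mathbf{1}_I$ by a $C^\infty_c$ weight, which is not optional but harmless). Your worry about the residual $v$-integral vanishing for a ``pathological'' $\mu$ is unfounded: after localizing $\mu$ via Lemma~\ref{lem_reduct} so that $v\in[a,b]$ with $0<a<b$, every summand in the $|v|^{-1/2}$-weighted slice integral carries the same phase $\sqrt{-i}$, so the integral is bounded below by a positive multiple of $\mu(S)$ by positivity of $\mu$ and the cutoff — this is exactly how the paper obtains $|\widehat\nu(\rho e_3)|\gtrsim\rho^{-1/2}$.
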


\begin{proof}
    Let $\mu \in \mathcal{M}(S)$. Suppose that there exists $\beta > 0$, such that $|\widehat{\mu}(\xi)| \leq C |\xi|^{-\frac{\beta}{2}}$ for $\xi \in \RR^3$, $C > 0$. By Lemma \ref{lem_reduct}, we may assume that 
    $S = \{\Phi(t, v) | |t| \leq c, v \in [a, b]\}$
    for $0 < a < b$, $c > 0$.
     
    We construct new measures. Let the reparametrization $T_s: S \to S$ be 
    \begin{equation}
    \label{eq31_TS}
        T_s(\Phi(t, v)) = \Phi(t-s, v).
    \end{equation}
    
    \begin{figure}[!h]
\centering
\beginpgfgraphicnamed{Illustration}
\begin{tikzpicture}
\draw[gray, thick] (0, 0) -- (3,0.5);
\draw[gray, thick] (1, 0.4) -- (3, 2.2);
\draw[gray, thick] (1.2, 1.5) -- (0.8, 3);
\filldraw[black] (2, 1.3) circle (2pt) node[anchor=west]{$\Phi(t, v)$};
\draw (0,0) .. controls (1, 0.2) and (2, 0.6) .. (0.5 ,3);

\draw[gray, thick] (10, 0) -- (13,0.5);
\draw[gray, thick] (11, 0.4) -- (13, 2.2);
\draw[gray, thick] (11.2, 1.5) -- (10.8, 3);
\filldraw[black] (12, 0.3) circle (2pt) node[anchor=north]{$\Phi(t-s, v)$};
\draw (10,0) .. controls (11, 0.2) and (12, 0.6) .. (10.5 ,3);
\draw[->]  (4, 1) -- (8, 1)
node [above,text width=3cm,text centered,midway]
{Re-parametrize by $T_s$
};

\draw[black, fill=gray, fill opacity=0.3] (2, 1.3) -- (2.4, 1.7) -- (2.4,2) -- (2,1.6) -- cycle;

\draw[black, fill=black, fill opacity=0.6] (12, 0.3) -- (12.4, 0.4) -- (12.4, 0.7) -- (12, 0.6) -- cycle;
\end{tikzpicture}
\endpgfgraphicnamed
\caption{Illustration of the re-parametrization by $T_s$ on $S$.}
\end{figure}
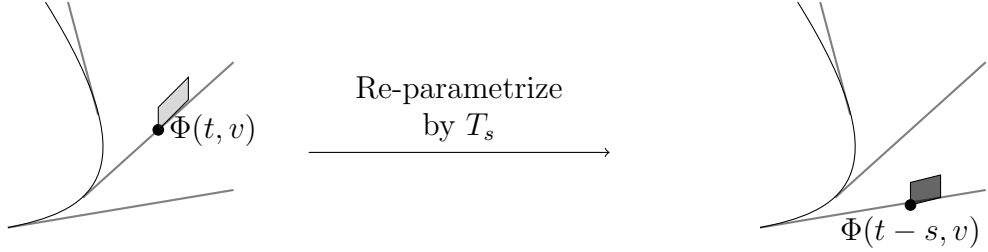

Let $\mu_s \in \mathcal{M}(S)$ be 
\begin{equation}
\label{eq31_mus}
    \mu_s = T_s^{\#} \mu.
\end{equation}
Let $\psi \in C_{0}^{\infty}(\RR)$ be non-negative bump function with $\psi(s)=1$ when $|s| \leq c$, $\psi(s)=0$ when $|s| \geq 2c$. Then we let the average measure $\nu$ be defined as
\begin{equation}
    \label{eq31_nu}
    \int f d\nu = \int \langle f, \mu_s \rangle \psi(s) ds
\end{equation}
for non-negative Borel functions $f$.

Now, we examine $\widehat{\nu}$ in the $e_3$ direction, where $e_3$ is also a normal vector at $\gamma(0, v)$. Proposition \ref{p_ahe_fd1} is a consequence of the following two lemmas.

    \begin{lemma}
        \label{lem_ahe_beta}
        For $\rho > 0$, $|\widehat{\nu}(\rho e_3)| \lesssim \rho^{-\frac{\beta}{2}}$.
    \end{lemma}

    \begin{lemma}
        \label{lem_ahe_half}
        There exists $\rho_0 > 0$, such that for $\rho > \rho_0$,
        $|\widehat{\nu}(\rho e_3)| \gtrsim \rho^{-\frac{1}{2}}$. \qedhere
    \end{lemma}
\end{proof}

\begin{proof}[Proof of Lemma \ref{lem_ahe_beta}]
    First, we show that for each $s\in \RR$, $|\widehat{\mu_s}(\rho e_3)| \lesssim \rho^{-\frac{\beta}{2}}$. By unwinding the definitions of push-forward measures, Fourier transform, and integration on the manifold, we have
    \begin{equation}
    \label{e_helix_mus}
        \begin{aligned}
            \widehat{\mu_s}(\rho e_3) & = \int e^{-2\pi i x \cdot \rho e_3} dT_s^{\#}\mu(x) & \text{ by (\ref{eq31_mus})}\\
            & = \int e^{-2\pi i \rho T_s(\Phi(t, v)) \cdot  e_3} d\mu\circ\Phi(t, v) \\
            & = \int e^{-2\pi i \rho \Phi(t-s, v) \cdot e_3} d\mu\circ \Phi(t, v) & \text{ by (\ref{eq31_TS})}.
        \end{aligned}
    \end{equation}

    From (\ref{eq31_s}) and (\ref{eq31_n}),
    \begin{equation}
    \label{eq_helix_Phie3}
             \Phi(t-s, v) \cdot e_3 
             = \Phi(t, v)\cdot \overrightarrow{n}(s) - s^3.
    \end{equation}
    Then, (\ref{e_helix_mus}) becomes
    \begin{equation}
        \begin{aligned}
            \widehat{\mu_s}(\rho e_3)
            & = \int e^{-2\pi i \rho \Phi(t-s, v) \cdot e_3} d\mu\circ \Phi(t, v) \\
            & = \int e^{-2\pi i \rho [\Phi(t, v)\cdot \overrightarrow{n}(s) -s^3]} d\mu\circ \Phi(t, v) & \text{ by (\ref{eq_helix_Phie3})}\\
            & = e^{\pi i \rho s^3} \widehat{\mu}(\rho \overrightarrow{n}(s)).
        \end{aligned}
        \tag{\text{(\ref{e_helix_mus}) c.}}
    \end{equation}
    Since $|\overrightarrow{n}(s)| \geq 1$, $|\widehat{\mu_s}(\rho e_3)| = |\widehat{\mu}(\rho \overrightarrow{n}(s))| \leq C |\rho \overrightarrow{n}(s)|^{-\frac{\beta}{2}} \leq C \rho^{-\frac{\beta}{2}}.$ Then,
    \begin{align*}
        |\widehat{\nu}(\rho e_3)| \leq & \int |\widehat{\mu_s}(\rho \overrightarrow{n}(s))| \psi(s) ds & \text{ by (\ref{eq31_nu})}\\
        \leq & C \norm{\psi}_{L^{1}} \rho^{-\frac{\beta}{2}}.  & &\qedhere
    \end{align*}
\end{proof}

\begin{proof}[Proof of Lemma \ref{lem_ahe_half}]
An expression of $\widehat{\nu}(\rho e_3)$ is given by
\begin{equation}
\label{e_helix_v_exp}
    \begin{aligned}
        \widehat{\nu}(\rho e_3) & = \int \int e^{-2 \pi i \rho \Phi(t-s, v) \cdot e_3} d\mu \circ \Phi(t, v) \psi(s) ds & \text{ by (\ref{eq31_nu})} \\
        & = \int \int e^{-2 \pi i \rho \Phi(t-s, v) \cdot e_3}  \psi(s) ds d\mu \circ \Phi(t, v).
    \end{aligned}
\end{equation}

We apply the stationary phase method to study the inner integral
\begin{equation*}
   I(\rho; t, v) := \int e^{-2 \pi i \rho \Phi(t-s, v) \cdot e_3}  \psi(s) ds.
\end{equation*}

Fixing $t, v$, we define the phase function $\phi$ of $I(\rho; t, v)$ as
$$\phi(s) := \Phi(t-s, v) \cdot e_3 = (t-s)^3+3v(t-s)^2 \text { from } (\ref{eq_helix_Phie3}).$$  Its derivatives are
$$\phi'(s) = -3(t-s)^2-6v(t-s),$$ and
$$\phi''(s) = 6(t-s)+6v.$$ The solutions to $\phi'(s)=0$ are
$$s = t, t+2v.$$
Since $v \in [a, b]$, if $-2c+2a>2c$ or $2c< a$, the only critical point in $\text{spt } \psi$ is $s=t$. Note that $$\phi(t)=0, \phi'(t)=0, \phi''(t) = 6v.$$ Then, by the stationary phase (part b, Theorem \ref{t_asym}), there exists $D_0 > 0$ independent of $\rho$, $t$, and $v$, such that when $\rho v$ is sufficiently large,
$$\left|I(\rho; t, v) - \left(\frac{-i}{6 \rho v}\right)^{\frac{1}{2}}\psi(t)\right| \leq D_0(\rho v)^{-1}.$$

Since $\psi(t) = 1$ for $|t| \leq c$, from (\ref{e_helix_v_exp}),
\begin{equation*}
    \begin{split}
        \left|\widehat{\nu}(\rho e_3) - \int \left(\frac{-i}{6 \rho v}\right)^{\frac{1}{2}}  d\mu \circ \Phi(t, v) \right| & \leq   \int D_0(\rho v)^{-1} d\mu \circ \Phi(t, v). \\
    \end{split}
\end{equation*}

As $v \in [a, b]$, 
\begin{align*}
    \left|\int \left(\frac{-i}{6 \rho v}\right)^{\frac{1}{2}}  d\mu \circ \Phi(t, v) \right| \geq & 6^{-\frac{1}{2}} b^{-\frac{1}{2}}\rho^{-\frac{1}{2}}, \\
    \left|\int D_0(\rho v)^{-1} d\mu \circ \Phi(t, v)\right| \leq & D_0 a^{-1}\rho^{-1}.
\end{align*}
Therefore,
$|\widehat{\nu}(\rho e_3)|$ is bounded below by $4^{-1}b^{-\frac{1}{2}} \rho^{-\frac{1}{2}}$ for $\rho$ sufficiently large.
\end{proof}

\subsection{A tangent surface generated by a perturbed helix}
\label{sec_phelix}

In this subsection, we examine the tangent surface generated by the curve $\alpha(t) = (t, t^2+t^4, t^3)$. 

\begin{proposition}
\label{p_phelix_fd_1}
    Let $I = [-2c, 2c]$ for $c>0$, and let $\alpha: I \to \RR^3$ be defined above. Let $$S := \{\alpha(t) + v\alpha'(t) | t \in I, a < v < b\}$$ be the tangent surface generated by $\alpha$, where $0 < a < b$. Then, $\dim_F(S) \leq 1$.
\end{proposition}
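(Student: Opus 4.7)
The plan is to adapt the three-step strategy used for Proposition \ref{p_ahe_fd1}: construct a family of push-forwards $\{T_s^{\#}\mu\}_s$, average them against a bump $\psi$ as in (\ref{eq31_nu}) into a measure $\nu$, and trap $\widehat\nu(\rho e_3)$ between a $\rho^{-\beta/2}$ upper bound inherited from $\mu$ and a $\rho^{-1/2}$ lower bound produced by stationary phase in $s$. The obstacle is that the naive shift $T_s(\Phi(t,v))=\Phi(t-s,v)$ no longer satisfies a clean Fourier identity: writing $\alpha=\gamma+(0,t^4,0)$ with $\gamma$ the plain cubic helix of Section \ref{sec_ahe} and expanding produces an irremovable residual $3s(t^4+4t^3v)$ that cannot be absorbed into $\Phi(t,v)\cdot\eta(s)$ for any vector $\eta(s)$. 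I would therefore abandon the shift and engineer a push-forward whose effect on $e_3$ is dictated directly by the tangent-plane structure of $S$. Concretely, compute the (non-unit) normal $\overrightarrow{n}(s):=\alpha'(s)\times\alpha''(s)=(3s^2-6s^4,\,-3s,\,1+6s^2)$ and the scalar offset $b(s):=-\overrightarrow{n}(s)\cdot\alpha(s)=-s^3+3s^5$, so that $\overrightarrow{n}(s)\cdot x+b(s)=0$ is the common tangent plane to $S$ along the ruling at parameter $s$; then take $T_s:\RR^3\to\RR^3$ to be $T_s(x):=(x_1,x_2,\overrightarrow{n}(s)\cdot x+b(s))$.

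Because $x\mapsto T_s(x)\cdot e_3$ is affine, the push-forward $\mu_s:=T_s^{\#}\mu$ satisfies
\[
\widehat{\mu_s}(\rho e_3)=e^{-2\pi i\rho b(s)}\,\widehat\mu\bigl(\rho\overrightarrow{n}(s)\bigr),
\]
and $|\overrightarrow{n}(s)|\geq 1+6s^2\geq 1$ transfers the hypothesis $|\widehat\mu(\xi)|\leq C|\xi|^{-\beta/2}$ to $|\widehat{\mu_s}(\rho e_3)|\leq C\rho^{-\beta/2}$; averaging against $\psi$ gives $|\widehat\nu(\rho e_3)|\lesssim\rho^{-\beta/2}$.

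For the matching lower bound, Fubini yields $\widehat\nu(\rho e_3)=\int I(\rho;t,v)\,d(\mu\circ\Phi)(t,v)$ with
\[
I(\rho;t,v)=\int e^{-2\pi i\rho\phi(s;t,v)}\psi(s)\,ds,\qquad \phi(s;t,v):=\overrightarrow{n}(s)\cdot\bigl(\Phi(t,v)-\alpha(s)\bigr).
\]
The orthogonalities $\overrightarrow{n}(s)\perp\alpha'(s)$ and $\overrightarrow{n}(s)\perp\alpha''(s)$ annihilate the first two Taylor coefficients of $\alpha(t)-\alpha(s)+v\alpha'(t)$ around $s$, leaving
\[
\phi(s;t,v)=\Bigl(\tfrac{(t-s)^3}{6}+\tfrac{v(t-s)^2}{2}\Bigr)\kappa(s)+O\bigl((t-s)^4\bigr),\qquad \kappa(s):=\overrightarrow{n}(s)\cdot\alpha'''(s)=6(1-6s^2),
\]
from which $\phi(t;t,v)=\partial_s\phi(t;t,v)=0$ and $\partial_s^2\phi(t;t,v)=v\kappa(t)=6v(1-6t^2)$ follow. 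Invoking Lemma \ref{lem_reduct} to shrink $c$ below $1/\sqrt{6}$ keeps this Hessian uniformly bounded away from zero, and since $\partial_s\phi$ is a degree-$4$ polynomial in $s$ whose roots other than $s=t$ depend continuously on $(t,v)$ and stay bounded away from $0$ at $t=0$, further shrinking $c$ isolates $s=t$ as the unique critical point of $\phi(\cdot;t,v)$ inside $\mathrm{spt}\,\psi$. Part (b) of Theorem \ref{t_asym} then gives, uniformly in $(t,v)$,
\[
I(\rho;t,v)=\Bigl(\tfrac{-i}{6\rho v(1-6t^2)}\Bigr)^{1/2}+O(\rho^{-1}),
\]
whose leading term has constant argument and modulus of order $\rho^{-1/2}$; integrating against $\mu\circ\Phi$ delivers $|\widehat\nu(\rho e_3)|\gtrsim\rho^{-1/2}$ for large $\rho$. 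Comparing the two estimates forces $\beta\leq 1$, and hence $\dim_F(S)\leq 1$.

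The main technical obstacle is the design of $T_s$: demanding that both the critical point $s=t$ and the critical value $0$ of $\phi(\cdot;t,v)$ persist identically in $v$ uniquely forces $\overrightarrow{n}(s)$ to be the actual normal to $S$ and $b(s)$ its tangent-plane offset, and it is precisely this rigidity that makes the nonzero eigenvalue $v\kappa(t)$ of the second fundamental form reappear as the stationary-phase Hessian---the mechanism advertised in the paper's overview.
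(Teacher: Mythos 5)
Your proposal takes essentially the same route as the paper: you construct the same map $T_s$ (fixing the first two coordinates and replacing the third by $\overrightarrow{n}(s)\cdot x - \overrightarrow{n}(s)\cdot\alpha(s)$, with $\overrightarrow{n}(s)=\alpha'(s)\times\alpha''(s)$, which matches the paper's normal in Lemma \ref{p_phelix_Ts}), derive the same identity $\widehat{\mu_s}(\rho e_3)=e^{-2\pi i\rho b(s)}\widehat\mu(\rho\overrightarrow{n}(s))$, and obtain the same stationary-phase Hessian $\phi''(t)=6v(1-6t^2)$ and the same $\rho^{-1/2}$ lower bound. The only cosmetic difference is that you define $T_s$ globally as an explicit affine map on $\RR^3$, which makes the Fourier identity an immediate consequence of linearity rather than of a separately stated lemma.
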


We denote
$\Phi: I \times (a, b) \to \RR^3$ be $\Phi(t, v)=\alpha(t) + v \alpha'(t)$.
Proposition \ref{p_phelix_fd_1} holds if the following lemma as an analog of (\ref{eq_helix_Phie3}) holds.

\begin{lemma}
\label{p_phelix_Ts}
For $s \in I$, there exists $T_s: S \to \RR^{3}$, such that
    \begin{equation}
    \label{p_phelix_ex}
     T_s(\alpha(t)+v\alpha'(t)) \cdot e_3  = (\alpha(t)+v\alpha'(t) - \alpha(s)) \cdot \overrightarrow{n}(s),
    \end{equation}
    where 
    \begin{equation}
    \label{N_phelix}
        \overrightarrow{n}(s) = (-3s^2(2s^2-1), -3s, 6s^2+1)
    \end{equation}
    is a normal vector of $S$ at $\Phi(s, v)$ with $\overrightarrow{n}(0) = e_3$.

    
\end{lemma}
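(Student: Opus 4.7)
The plan is to define $T_s$ as the restriction to $S$ of an affine map on $\RR^3$ that translates $\alpha(s)$ to the origin and aligns the normal $\overrightarrow{n}(s)$ with $e_3$. Concretely, I would let $A_s$ be any $3 \times 3$ matrix whose third row equals $\overrightarrow{n}(s)^T$, and set
$$T_s(x) := A_s(x - \alpha(s)).$$
Then directly from the construction, $T_s(x) \cdot e_3 = \overrightarrow{n}(s) \cdot (x - \alpha(s))$, and substituting $x = \alpha(t) + v\alpha'(t) \in S$ yields the identity (\ref{p_phelix_ex}) essentially by unpacking definitions. A natural concrete choice is to fill in the first two rows of $A_s$ with $e_1$ and $e_2$, making $\det A_s = 6s^2 + 1 > 0$ so that $T_s$ is an affine isomorphism --- a property I expect will be needed when $T_s$ is used to push measures forward as in Section \ref{sec_ahe}.

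It remains to verify that the vector $\overrightarrow{n}(s)$ in (\ref{N_phelix}) is indeed normal to $S$ at $\Phi(s, v)$ and reduces to $e_3$ at $s = 0$. The tangent plane $T_{\Phi(s,v)} S$ is spanned by $\Phi_t(s, v) = \alpha'(s) + v\alpha''(s)$ and $\Phi_v(s, v) = \alpha'(s)$, which, since $v > 0$, coincides with $\mathrm{span}\{\alpha'(s), \alpha''(s)\}$. The cross product of $\alpha'(s) = (1, 2s + 4s^3, 3s^2)$ with $\alpha''(s) = (0, 2 + 12s^2, 6s)$ should yield
$$\alpha'(s) \times \alpha''(s) = (6s^2 - 12s^4,\; -6s,\; 2 + 12s^2) = 2\,\overrightarrow{n}(s),$$
confirming both normality and, by setting $s = 0$, the identity $\overrightarrow{n}(0) = (0,0,1) = e_3$. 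This is a routine computation and is the only place where the specific form of $\alpha$ enters the proof of the lemma.

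The conceptual point distinguishing this example from the helix of Section \ref{sec_ahe} is that $T_s$ no longer maps $S$ into itself; the image $T_s(S)$ is generally a different tangent surface. This is precisely why the freedom to choose the top two rows of $A_s$ arbitrarily is available --- only the behaviour of $T_s$ on the $e_3$-coordinate matters for the stationary phase argument that will bound $\widehat{\nu}(\rho e_3)$. I do not anticipate a real technical obstacle in the lemma itself; the genuine work has been front-loaded into the choice of $\overrightarrow{n}(s)$, so that once $T_s$ is built with $\overrightarrow{n}(s)^T$ as its third row, (\ref{p_phelix_ex}) becomes a one-line identity. The real difficulty --- establishing the analogues of Lemmas \ref{lem_ahe_beta} and \ref{lem_ahe_half} for this perturbed phase function --- will arise in the subsequent Fourier and stationary-phase analysis, not here.
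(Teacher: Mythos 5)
Your proposal is correct and takes essentially the same approach as the paper: both define $T_s$ by keeping the first two coordinates affine-trivial and making the third coordinate equal the right-hand side of (\ref{p_phelix_ex}), with the paper's one-line construction simply omitting the translation in the first two coordinates. Your verification that $\alpha'(s)\times\alpha''(s)=2\,\overrightarrow{n}(s)$ is a correct and welcome check of (\ref{N_phelix}), which the paper asserts without computation.
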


\begin{proof}[Proof of Proposition \ref{p_phelix_fd_1} assuming Lemma \ref{p_phelix_Ts}]

Let $\mu \in \mathcal{M}(S)$, and suppose that there exists  $\beta > 0$, such that $|\widehat{\mu}(\xi)| \leq C |\xi|^{-\frac{\beta}{2}}$ for $\xi \in \RR^3$, $C > 0$. 
    
We construct new measures. Let $\mu_s \in \mathcal{M}(\RR^3)$ be
\begin{equation}
\label{eq32_mus}
\mu_s = T_s^{\#} \mu.
\end{equation} Let $\psi \in C_{0}^{\infty}(\RR)$ be a non-negative bump function with $\psi(s)=1$ when $|s| \leq c$, $\psi(s)=0$ when $|s| \geq 2c$. Then we let the average measure $\nu$ be defined as
\begin{equation}
    \label{eq32_nu}
    \int f d\nu = \int \langle f, \mu_s \rangle \psi(s) ds
\end{equation}
for non-negative Borel functions $f$. Proposition \ref{p_phelix_fd_1} is a consequence of the following two lemmas.

    \begin{lemma}
        \label{l_phelix_beta}
        For $\rho > 0$, $|\widehat{\nu}(\rho e_3)| \lesssim \rho^{-\frac{\beta}{2}}$.
    \end{lemma}

    \begin{lemma}
        \label{l_phelix_half}
        There exists $\rho_0 > 0$, such that for $\rho > \rho_0$,
        $|\widehat{\nu}(\rho e_3)| \gtrsim \rho^{-\frac{1}{2}}$. \qedhere
    \end{lemma}
\end{proof}

Compared with previous methods, the computations in Lemmas \ref{l_phelix_beta} and \ref{l_phelix_half} do not require $\text{spt }\nu \subset S$.

\begin{proof}[Proof of Lemma \ref{l_phelix_beta}]
    First, we show that for each $s\in I$, $|\widehat{\mu_s}(\rho e_3)| \lesssim \rho^{-\frac{\beta}{2}}$. By unwinding the definitions of the push-forward measure, Fourier transform, and integration on the manifold, we have
    \begin{equation}
    \label{eq_phelix_mus_ex}
        \begin{aligned}
            \widehat{\mu_s}(\rho e_3) & = \int e^{-2\pi i y \cdot \rho e_3} dT_s^{\#}\mu(y) & \text{ by (\ref{eq32_mus})} \\
            & = \int e^{-2\pi i \rho T_s(\Phi(t, v)) \cdot  e_3} d\mu\circ\Phi(t, v) \\
            & = \int e^{-2\pi i \rho [(\alpha(t)+v\alpha'(t))  \cdot \overrightarrow{n}(s) - \alpha(s) \cdot \overrightarrow{n}(s)]} d\mu\circ \Phi(t, v) & \text{ by (\ref{p_phelix_ex})} \\
            & = e^{2 \pi i \rho \alpha(s) \cdot \overrightarrow{n}(s)} \widehat{\mu}(\rho \overrightarrow{n}(s)).
        \end{aligned}
    \end{equation}
    Since $|\overrightarrow{n}(s)| \geq 1$,  $|\widehat{\mu_s}(\rho e_3)| = |\widehat{\mu}(\rho \overrightarrow{n}(s))| \leq C |\rho \overrightarrow{n}(s)|^{-\frac{\beta}{2}} \leq C \rho^{-\frac{\beta}{2}}.$ Then,
        \begin{align*}
            |\widehat{\nu}(\rho e_3)| \leq & \int |\widehat{\mu_s}(\rho e_3)| \psi(s) ds & \text{ by (\ref{eq32_nu})}\\
            \leq & C \norm{\psi}_{L^{1}} \rho^{-\frac{\beta}{2}}. & & \qedhere
        \end{align*} 
\end{proof}

\begin{proof}[Proof of Lemma \ref{l_phelix_half}]
An expression of $\widehat{\nu}(\rho e_3)$ is given by
\begin{equation}
\label{eq_phelix_nu_ex}
    \begin{aligned}
        \widehat{\nu}(\rho e_3) & = \int \int e^{-2\pi i \rho T_s(\Phi(t, v)) \cdot  e_3} d\mu\circ\Phi(t, v) \psi(s) ds & \text{ by (\ref{eq32_nu})}\\
        & = \int \int e^{-2\pi i \rho T_s(\Phi(t, v)) \cdot  e_3}  \psi(s) ds d\mu \circ \Phi(t, v).
    \end{aligned}
\end{equation}

We apply the stationary phase method to study the inner integral
\begin{equation*}
   I(\rho; t, v) := \int e^{-2\pi i \rho T_s(\Phi(t, v)) \cdot  e_3}  \psi(s) ds.
\end{equation*}

Fixing $t, v$, we define the phase function $\phi$ of $I(\rho; t, v)$ as
    \begin{align*}
        \phi(s) & := T_s(\Phi(t, v)) \cdot  e_3 = (\alpha(t)+v\alpha'(t))  \cdot \overrightarrow{n}(s) - \alpha(s) \cdot \overrightarrow{n}(s) & \text{ by }(\ref{p_phelix_ex}) \\
        & = [(t-s, t^2+t^4-s^2-s^4, t^3-s^3) + v(1, 2t+4t^3, 3t^2)] \cdot (
        -6s^4+3s^2, -3s, 6s^2+1)
    \end{align*}
Its first derivative is
\begin{equation*}
    \begin{split}
    \phi'(s) 
        = & -3(t-s) [(t-s)^3+4v(t-s)^2+(1-6s^2)(t-s)+2v(1-6s^2)].
    \end{split}
\end{equation*}
One critical point is $s=t$. If $c$ is sufficiently small, the only critical point in $\text{spt }\psi$ is $s=t$. Note that 
$$\phi(t)=0, \phi'(t)=0, \phi''(t) = v(6-36t^2).$$ 
Then, by the stationary phase (part b of Theorem \ref{t_asym}),
$$\left| I(\rho; t, v) - \left(\frac{-i}{\rho  v  (6-36t^2)}\right)^{\frac{1}{2}}\psi(t)\right| \leq D_1 (\rho v)^{-\frac{3}{2}}$$
for a $D_1 > 0$ independent of $\rho$, $t$, and $v$. Since $\psi(t) = 1$ for $|t| \leq c$, continuing with (\ref{eq_phelix_nu_ex}),
\begin{equation*}
    \begin{split}
        \left| \widehat{\nu}(\rho e_3) - \int \left(\frac{-i}{\rho  v  (6-36t^2)}\right)^{\frac{1}{2}}  d\mu \circ \Phi(t, v)\right| & \leq  \int D_1 (\rho v)^{-1}  d\mu \circ \Phi(t, v)\\
    \end{split}
\end{equation*}

As $v \in [a, b]$, 
\begin{align*}
  \left|\int \left(\frac{-i}{\rho  v  (6-36t^2)}\right)^{\frac{1}{2}}  d\mu \circ \Phi(t, v)\right| \geq & 6^{-\frac{1}{2}}b^{-\frac{1}{2}}\rho^{-\frac{1}{2}}, \\
  \left|\int D_1 (\rho v)^{-1}  d\mu \circ \Phi(t, v)\right| \leq & D_1 a^{-1} \rho^{-1}.
\end{align*}
Therefore, $|\widehat{\nu}(\rho e_3)|$ is bounded below by $4^{-1}b^{-\frac{1}{2}} \rho^{-\frac{1}{2}}$ for $\rho$ sufficiently large.
\end{proof}

Now, we show the existence of $T_s$ for the perturbed helix.

\begin{proof}[Proof of Lemma \ref{p_phelix_Ts}]
    To satisfy (\ref{p_phelix_ex}), a possible $T_s$ fixes the first two coordinates of $\alpha(t)+v\alpha'(t)$ and updates the third coordinate to the right-hand side of (\ref{p_phelix_ex}).
\end{proof}

\begin{remark}
    The tangent surface generated by the curve $\gamma(t) = (t, t^2, t^3)$ presented in Section \ref{sec_ahe} is one case where $T_s$ 
    satisfies (\ref{p_phelix_ex}) and $T_s(S) \subset S$. For the example in this section, it is not possible to ensure that $T_s(S) \subset S$ and $\mu_s, \nu$ are still supported on $S$, even if we only require $\overrightarrow{n}(s)$ to be any normal vector of $S$ at $\Phi(s, v)$ and have a norm comparable to 1. 
        
    One interpretation is that $T_s$ defined in the proof maps $S \to S_{s}$, where $S_s$ is the tangent surface generated by the curve
        $$\alpha_s(t) = \langle t, t^2+t^4, 6s^3(s-1)(t-s)+(1-6s^2)(t-s)^3-3s(t-s)^4\rangle,$$
        and
        $$T_s(\alpha(t)+v\alpha'(t)) = \alpha_s(t)+v\alpha_s'(t).$$
\end{remark}

    \begin{figure}[!h]
\centering
\beginpgfgraphicnamed{Illustration}
\begin{tikzpicture}
\draw[gray, thick] (0, 0) -- (3,0.5);
\draw[gray, thick] (1, 0.4) -- (3, 2.2);
\draw[gray, thick] (1.21, 1.5) -- (0.8, 3);
\filldraw[black] (2, 1.3) circle (2pt) node[anchor=west]{$\alpha(t)+v\alpha'(t)$};
\draw (0,0) .. controls (1, 0.2) and (2, 0.6) .. (0.5 ,3);
\node[draw=none] at (0.5,1) {$S$};

\draw[gray, thick] (10, 0) -- (13, 1.5);
\draw[gray, thick] (10.93, 0.9) -- (11.8, 3);
\draw[gray, thick] (10.75, 2) -- (9.9, 3);
\filldraw[black] (11.5, 2.3) circle (2pt) node[anchor=west]{$\alpha_s(t)+v\alpha_s'(t)$};
\draw (10,0) .. controls (11, 0.5) and (12, 2) .. (9 , 3);
\node[draw=none] at (10,1) {$S_{s}$};

\draw[->]  (5, 1) -- (9, 1)
node [above,text width=3cm,text centered,midway]
{$T_s$
};

\draw[black, fill=gray, fill opacity=0.3] (2, 1.3) -- (2.4, 1.7) -- (2.4,2) -- (2,1.6) -- cycle;

\draw[black, fill=black, fill opacity=0.6] (11.5, 2.3) -- (11.8, 3) -- (11.6, 3.2) -- (11.3, 2.45) -- cycle;
\end{tikzpicture}
\endpgfgraphicnamed
\caption{Illustration of the map $T_s:S \to S_{s}$.}
\end{figure}
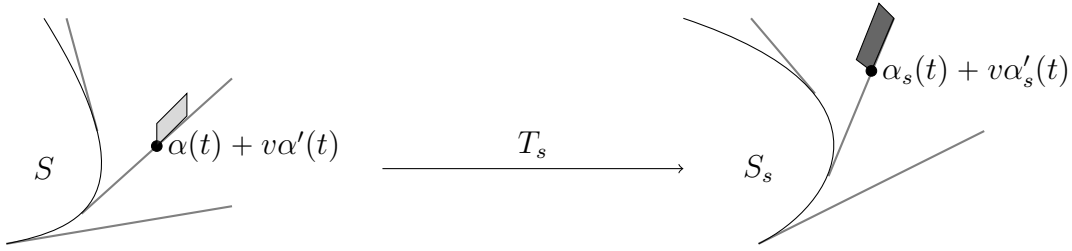

\section{Proof of Proposition \ref{p_fd_leq_k}}
\label{sec764}

We use terminologies commonly used in describing hypersurfaces of constant rank.

\begin{definition}
$M$ is a $(d-k)$\textit{-ruled} hypersurface if near $p \in M$, a coordinate chart is given by $\Phi: U \times V \subset \RR^{k} \times \RR^{d-k} \to S$ as 
    \begin{equation}
        \Phi(u, v) = \alpha(u) + \sum_{l=1}^{d-k} v_l w_l(u),
    \end{equation}
    with $\alpha, w_1, \ldots, w_{d-k}:\RR^{k} \to \RR^{d+1}$. A \textit{ruling} is $\{\Phi(u, v) | v\in V\}$ for a fixed $u$. 
\end{definition}

Let $N(u, v)$ be the normal vector of $M$ at $\Phi(u, v)$. In the proof of Proposition \ref{p_fd_leq_k}, we rely on the following parametrization on hypersurfaces with constant rank $k$. 

\begin{lem}
\cite[Lemma 3.1]{Hartman} \cite{Ushakov}
\label{lem_hartman}
The following statements are equivalent.
\begin{enumerate}
    \item $M$ is a smooth hypersurface in $\RR^{d+1}$ of constant rank $k$.
    \item $M$ is $(d-k)$-ruled, and the normal vectors along the rulings are constant.
\end{enumerate}
\end{lem}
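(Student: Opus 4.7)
The plan is to prove the two directions separately; the direction $(2) \Rightarrow (1)$ is a direct calculation, while $(1) \Rightarrow (2)$ is a Frobenius-type foliation argument that constitutes the bulk of the work.

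For $(2) \Rightarrow (1)$, I would differentiate the given parametrization. Since $\Phi(u,v) = \alpha(u) + \sum_l v_l w_l(u)$ is linear in $v$, we have $\Phi_{v_l} = w_l(u)$, and these are linearly independent because $\Phi$ is an immersion. Because $N = N(u)$ depends only on $u$, $D_{\Phi_{v_l}} N = \partial_{v_l}(N\circ\Phi) = 0$, so each $w_l$ lies in $\ker L_p$, giving $\operatorname{rank} L_p \leq k$. The opposite inequality (hence constancy of rank at $k$) comes from the fact that the ruling of the chart is maximal: $L_p$ is nondegenerate on the complementary $k$-dimensional subspace spanned by $\Phi_{u_1}, \ldots, \Phi_{u_k}$.

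For $(1) \Rightarrow (2)$, set $\mathcal{K}_p := \ker L_p \subset T_p M$. By the constant-rank hypothesis, $\dim \mathcal{K}_p = d-k$, and $\mathcal{K}$ is a smooth distribution. The first key step is that $\mathcal{K}$ is involutive. For $X, Y \in \mathcal{K}$ and any tangent $Z$, the identities $\text{II}(X, \cdot) \equiv 0$ and $\text{II}(Y, \cdot) \equiv 0$ reduce the Codazzi--Mainardi equation $(\nabla_X \text{II})(Y, Z) = (\nabla_Y \text{II})(X, Z)$ to $\text{II}(\nabla_X Y - \nabla_Y X, Z) = 0$, i.e., $\text{II}([X,Y], Z) = 0$ for all $Z$. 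Since $L_p(W) = 0$ iff $\text{II}(W, \cdot) \equiv 0$, this yields $[X, Y] \in \mathcal{K}$. Frobenius then produces a $(d-k)$-dimensional foliation of $M$; along each leaf $L$, $D_X N = -L_p X = 0$ for $X \in \mathcal{K} = TL$, so $N$ is constant along $L$, which supplies the "constant normal" half of (2).

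The second key step is that each leaf $L$ is in fact an open subset of an affine $(d-k)$-flat in $\RR^{d+1}$. This is the classical relative-nullity or Hartman--Nirenberg type assertion and is where the constant-rank hypothesis is truly essential. One route: for an integral curve $\gamma$ of a vector field $X \in \mathcal{K}$ in $L$, constancy of $N$ on $L$ gives $\gamma'' \cdot N = 0$, and constancy of the rank-$k$ distribution $\operatorname{Im}(L)$ transverse to $\mathcal{K}$, combined with an ODE/linear-algebra argument, forces $\gamma'' = 0$, as carried out in \cite{Hartman, Ushakov}. Having established this, I would build the required chart by picking a smooth $k$-dimensional transversal $\Sigma$ through a chosen point, parametrizing it as $\alpha(u)$, choosing a smooth local frame $w_1(u), \ldots, w_{d-k}(u)$ of $\mathcal{K}|_\Sigma$, and setting $\Phi(u, v) = \alpha(u) + \sum_l v_l w_l(u)$; affineness of the leaves ensures $\Phi(u, v) \in M$, and transversality ensures $\Phi$ is a local diffeomorphism. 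The main obstacle is the affineness step, which lies deeper than the Codazzi-based involutivity argument and is the precise content of the cited results of Hartman and Ushakov.
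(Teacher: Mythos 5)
The paper does not prove this lemma; it cites it from Hartman (Lemma 3.1) and Ushakov, and the discussion that follows (items a.\ and b.) merely extracts consequences of the ruled parametrization once the lemma is granted. So there is no in-paper argument to compare yours against, and I will evaluate your sketch on its own terms.

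Your $(1)\Rightarrow(2)$ outline is the right skeleton and is the one used in those references: the kernel distribution $\mathcal{K}=\ker L$ is smooth of constant dimension $d-k$ by the constant-rank hypothesis; involutivity via Codazzi--Mainardi is exactly the correct mechanism; Frobenius yields the foliation; and constancy of $N$ along leaves is then immediate from $D_X N = -L_pX = 0$ for $X\in\mathcal{K}$. You correctly identify the crux as the flatness (affineness) of the leaves, which is the genuinely hard, global-flavored step of Hartman--Nirenberg type, and punting that step to the cited sources is reasonable for a sketch. The construction of the chart from a transversal, a parametrization $\alpha(u)$ of it, and a frame $w_l(u)$ for $\mathcal{K}$ along it is also standard and fine.

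The $(2)\Rightarrow(1)$ direction has a real gap as written. From $\Phi(u,v)=\alpha(u)+\sum_l v_l w_l(u)$ with $N=N(u)$ you correctly get $w_l\in\ker L_p$, hence $\operatorname{rank}L_p\le k$. But the paper's definition of ``$(d-k)$-ruled'' carries no maximality: as the paper's own Remark (a) notes, an $m$-ruled hypersurface is also $(m-1)$-ruled. Concretely, the hyperplane $\{x_{d+1}=0\}$ admits a $(d-1)$-ruled chart with constant normal, yet has rank $0$, not $1$. So ``$L_p$ is nondegenerate on the complementary subspace because the ruling is maximal'' is not something one can read off condition (2) as stated; it is the needed hypothesis, not a consequence. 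To close this you must either (i) build the maximality into (2) explicitly --- e.g.\ require that $u\mapsto N(u)$ be an immersion, equivalently that the $k\times k$ block $\bigl(\partial_{u_j}\partial_{u_{j'}}\Phi\cdot N\bigr)$ be nondegenerate, which is what the paper's item b.\ uses --- or (ii) state the equivalence as ``rank $\ge k$'' throughout. This imprecision is arguably inherited from the lemma statement itself, but your write-up asserts the nondegeneracy as a fact rather than flagging it as an assumption, and that should be fixed.
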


We deduce the analytic properties of constant rank hypersurfaces that will be applied throughout this section. 

\begin{enumerate}[a.]
    \item The normal vectors along the rulings being constant is equivalent to 
    \begin{equation}
    \label{stable_ruling}
        N(u, v) = N(u, v')
    \end{equation}
    for $v \neq v'$. We may denote $N(u) = N(u, v)$ for a $v \in V$.
    Since $N(u, v)$ is the normal vector at $p = \Phi(u, v)$, it is perpendicular to all vectors in $T_p M$. Therefore,
    \begin{equation}
    \label{Phi_first_d}
        \begin{split}
            \left\langle \frac{\partial \Phi}{\partial u_j}(u, v), N(u, v)\right\rangle & = \left\langle \frac{\partial \alpha}{\partial u_j}(u) + \sum_{l=1}^{d-k} v_l \frac{\partial w_l}{\partial u_j}(u), N(u, v)\right\rangle = 0,\\
            \left\langle \frac{\partial \Phi}{\partial v_l}(u, v), N(u, v)\right\rangle & = \left\langle w_l(u), N(u, v)\right\rangle = 0.
        \end{split}
    \end{equation}
    From the first equation of (\ref{Phi_first_d}), for a fixed $u$, since $N(u, v) = N(u, v')$ when $v \neq v'$, for all $l$ and $j$,
    $$\left\langle  \frac{\partial w_l}{\partial u_j}(u), N(u, v)\right\rangle = 0.$$
    By differentiating the second equation in (\ref{Phi_first_d}) with respect to $u_j$, we obtain \begin{equation}
    \label{eq_n_stable}
        \left\langle  w_l(u) , \frac{\partial N}{\partial u_j} (u, v)  \right\rangle = 0.
    \end{equation}
    \item The second fundamental form of $M$ at $p$ has rank $k$. Since $$\frac{\partial^2 \Phi}{\partial v_{l} \partial v_{l'}}(u, v) = 0,$$ and 
    $$\left\langle \frac{\partial^2 \Phi}{\partial u_{j} \partial v_{l}}(u, v), N(u, v)\right\rangle = \left\langle \frac{\partial w_l}{\partial u_j}(u), N(u, v)\right\rangle=0,$$
    the top-left $k \times k$ submatrix of the second fundamental form $$\left(\frac{\partial^2 \Phi}{\partial u_{j} \partial u_{j'}}(u, v) \cdot N(u, v)\right)$$ has full rank $k$.
\end{enumerate}
    
By rotation and translation, we may assume that $U$ is a neighborhood of $0$, $N(0) = e_{d+1} \in \RR^{d+1}$, where $\{e_1, \cdots, e_{d+1}\}$ is the standard basis of $\RR^{d+1}$. 

\begin{remark}
\begin{enumerate}[a.]
    \item A $(d-k)$-ruled hypersurface does not have a constant rank of $k$ if the normal vectors are not constant along the rulings, and an $m$-ruled hypersurface can also be viewed as an $(m-1)$-ruled hypersurface.  For example, in $\RR^3$, the hyperboloid given by $x^2+y^2=z^2+1$ is $1$-ruled, but it is a hypersurface with non-vanishing Gaussian curvature. \cite[Sec. 3-5]{do_diff} The normal vectors of the hyperboloid are not constant along the rulings. 
    \item 
The proof of Theorem \ref{t33} uses Morse's Lemma (Lemma \ref{lem_morse}) to parametrize a hypersurface $M$. 
The author's previous work \cite{jz} partly uses Morse's Lemma to parametrize cones and cylinders. A version of Proposition \ref{p_fd_leq_k} could also be proven via Morse's Lemma; however, the conclusion would be the same as the Fourier dimension is independent of the parametrizations.
\item The hypersurface $M$ is cylindrical if $w_l(u_1) = w_l(u_2)$ for all $u_1, u_2 \in U$, $1 \leq l \leq d-k$. There is a simpler proof in Lemma \ref{lem_fd_cart} for a cylindrical hypersurface since it can be written as  $S \times \RR^{d-k}$ for a hypersurface $S \subset \RR^{k+1}$ with non-vanishing Gaussian curvature, and it follows that $\dim_F(M) = \dim_F(S) = k$.
\end{enumerate}

\end{remark}

\subsection{The Main Proof}

Proposition \ref{p_fd_leq_k} holds if the following lemma holds.

\begin{lemma}
\label{l_Ts_ex}
    For $s \in U$, there exists $T_s: M \to \RR^{d+1}$, such that
    \begin{equation}
    \label{e621}
     T_s(\Phi(u, v)) \cdot e_{d+1}  = (\Phi(u, v) - \alpha(s)) \cdot N(s)
    \end{equation}
    for the unit normal vector $N(s)$.
\end{lemma}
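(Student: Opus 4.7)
The plan is to write down $T_s$ explicitly, exactly as in the proof of Lemma \ref{p_phelix_Ts} for the perturbed helix. The condition (\ref{e621}) constrains only the $e_{d+1}$-component of $T_s(\Phi(u,v))$, so I have free rein over the other $d$ components, and the simplest choice is to preserve the first $d$ coordinates of the input and overwrite only the last.

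Concretely, for any point $p = (p_1, \ldots, p_{d+1}) \in M$, define
$$T_s(p) := \bigl(p_1,\, \ldots,\, p_d,\; (p - \alpha(s)) \cdot N(s)\bigr).$$
The right-hand side depends only on the coordinates of $p$ in $\RR^{d+1}$, so there is no ambiguity arising from different parameter values $(u,v)$ that might represent the same point; the map is well-defined on all of $\RR^{d+1}$ and in particular on $M$. It is affine, hence smooth, and reading off the last coordinate when $p = \Phi(u,v)$ verifies (\ref{e621}) on the nose:
$$T_s(\Phi(u,v)) \cdot e_{d+1} = (\Phi(u,v) - \alpha(s)) \cdot N(s).$$
The normalization $N(0) = e_{d+1}$ set up at the start of the section ensures that at $s = 0$ the map $T_s$ coincides with the identity on the relevant coordinate, which is what allows the stationary phase argument to exploit $e_{d+1}$ as the distinguished direction.

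The only subtlety worth flagging — and it is the whole point of the discussion following Lemma \ref{p_phelix_Ts} — is that $T_s(M)$ will in general not be contained in $M$. Demanding such a self-mapping property, as was done implicitly in \cite{Fraser_2022, jz}, is simply too restrictive for a general constant rank hypersurface. Fortunately, the Fourier-analytic arguments that come next (mirroring Lemmas \ref{l_phelix_beta} and \ref{l_phelix_half}) use only the identity (\ref{e621}) together with the boundedness of $N(s)$ near $s = 0$; they do not require the averaged measure $\nu$ obtained by integrating $T_s^{\#}\mu$ against a bump in $s$ to be supported on $M$.

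For this reason the lemma itself is essentially definitional and carries no genuine obstacle. The real difficulty is deferred to the subsequent sections, where one must run a $k$-dimensional stationary phase analysis on the inner $s$-integral to extract decay of order $\rho^{-k/2}$ in $|\widehat{\nu}(\rho e_{d+1})|$; the non-degenerate Hessian needed there will come from the full-rank $k \times k$ block of the second fundamental form isolated in paragraph b of the preceding discussion, applied along the $u$-variables transverse to the rulings.
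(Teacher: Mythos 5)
Your construction is exactly the paper's: fix the first $d$ coordinates of $\Phi(u,v)$ and overwrite the last with $(\Phi(u,v)-\alpha(s))\cdot N(s)$, which makes \eqref{e621} an identity by inspection. The surrounding remarks about well-definedness and about $T_s(M)\not\subset M$ are accurate and match the paper's own commentary, so there is nothing to add.
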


\begin{proof}[Proof of Proposition \ref{p_fd_leq_k} assuming Lemma \ref{l_Ts_ex}]

Let $\mu \in \mathcal{M}(M)$, and suppose that there exists  $\beta > 0$, such that $|\widehat{\mu}(\xi)| \leq C |\xi|^{-\frac{\beta}{2}}$ for $\xi \in \RR^{d+1}$, $C>0$. By Lemma \ref{lem_reduct}, we may assume $\mu \in \mathcal{M}(\Phi(\frac{1}{2}U, V))$. 
    
    We construct new measures. Let $\mu_s \in \mathcal{M}(\RR^{d+1})$ be 
    \begin{equation}
        \label{eq4_mus}
        \mu_s = T_s^{\#} \mu.
    \end{equation}
    Let $\psi \in C_{0}^{\infty}(\RR^{k})$ be non-negative bump function with $\psi(s)= 1$ when $s \in \frac{1}{2} U$, $\psi(s)=0$ when $s \in (\frac{3}{4}U)^c$. Then, we define the average measure $\nu \in \mathcal{M}(\RR^{d+1})$ as
    \begin{equation}
        \label{eq4_nu}
        \int f d\nu = \int \langle f, \mu_s \rangle \psi(s) ds
    \end{equation}
    for non-negative Borel functions $f$ via the Riesz representation theorem for positive linear functionals \cite{rudin1987real}. Proposition \ref{p_fd_leq_k} is a consequence of the following two lemmas. 

    \begin{lemma}
        \label{l_nuhat_beta}
        For $\rho > 0$, $|\widehat{\nu}(\rho e_{d+1})| \lesssim \rho^{-\frac{\beta}{2}}$.
    \end{lemma}

    \begin{lemma}
        \label{l_nuhat_k2}
        There exists $\rho_0 > 0$, such that for $\rho > \rho_0$,
        $|\widehat{\nu}(\rho e_{d+1})| \gtrsim \rho^{-\frac{k}{2}}$. \qedhere
    \end{lemma}
\end{proof}

\begin{proof}[Proof of Lemma \ref{l_nuhat_beta}]
    First, we show that for each $s\in U$, $|\widehat{\mu_s}(\rho e_{d+1})| \lesssim \rho^{-\frac{\beta}{2}}$. By unwinding the definitions of the push-forward measure, Fourier transform, and integration on the manifold, we have
    \begin{equation}
    \label{eq_mus_ex}
        \begin{aligned}
            \widehat{\mu_s}(\rho e_{d+1}) & = \int e^{-2\pi i y \cdot \rho e_{d+1}} dT_s^{\#}\mu(y) & \text{ by (\ref{eq4_mus})}\\
            & = \int e^{-2\pi i \rho T_s(\Phi(u, v)) \cdot  e_{d+1}} d\mu\circ\Phi(u, v) \\
            & = \int e^{-2\pi i \rho [ \Phi(u, v)  \cdot N(s) - \alpha(s) \cdot N(s)]} d\mu\circ \Phi(u, v) & \text{ by (\ref{e621})} \\
            & = e^{2 \pi i \rho \alpha(s) \cdot N(s)} \widehat{\mu}(\rho N(s)).
          \end{aligned}
    \end{equation}
    Since $|N(s)| = 1$, $|\widehat{\mu_s}(\rho e_{d+1})| = |\widehat{\mu}(\rho N(s))| \leq  C\rho^{-\frac{\beta}{2}}.$ Then
        \begin{align*}
            |\widehat{\nu}(\rho e_{d+1})| \leq & \int |\widehat{\mu_s}(\rho N(s))| \psi(s) ds & \text{ by (\ref{eq4_nu})} \\
            \leq & C \norm{\psi}_{L^{1}} \rho^{-\frac{\beta}{2}}. && \qedhere
        \end{align*} 
\end{proof}

\begin{proof}[Proof of Lemma \ref{l_nuhat_k2}]
An expression of $\widehat{\nu}(\rho e_{d+1})$ is given by
\begin{equation}
\label{eq_nu_ex}
    \begin{aligned}
        \widehat{\nu}(\rho e_{d+1}) & = \int \int e^{-2\pi i \rho T_s(\Phi(u, v)) \cdot  e_{d+1}} d\mu\circ\Phi(u, v) \psi(s) ds & \text{ by (\ref{eq4_nu})}\\
        & = \int \int e^{-2\pi i \rho T_s(\Phi(u, v)) \cdot  e_{d+1}}  \psi(s) ds d\mu \circ \Phi(u, v).
    \end{aligned}
\end{equation}

We apply the stationary phase method to study the inner integral
\begin{equation*}
   I(\rho; u, v) := \int e^{-2\pi i \rho T_s(\Phi(u, v)) \cdot  e_{d+1}}  \psi(s) ds.
\end{equation*}

Fixing $u, v$, we define the phase function $\phi$ of $I(\rho; t, v)$ as
$$\phi(s) := T_s(\Phi(u, v)) \cdot  e_{d+1} = \Phi(u, v)  \cdot N(s) - \alpha(s) \cdot N(s) \text{ by }(\ref{e621}).$$ 
Two claims about the critical point of $\phi$ are as follows:
\begin{itemize}
    \item One critical point is $s=u$: by (\ref{eq_n_stable}),
    \begin{equation*}
        \begin{split}
            \frac{\partial \phi}{\partial s_j}(u) = & (\Phi(u, v) - \alpha(u)) \cdot \frac{\partial N}{\partial u_j}(u) \\
            = &  \left(\alpha(u) + \sum_{l=1}^{d-k} v_l w_l(u) - \alpha(u) \right) \cdot \frac{\partial N}{\partial u_j}(u) \\
            = & \sum_{l=1}^{d-k} v_l w_l(u) \cdot \frac{\partial N}{\partial u_j}(u) = 0.
        \end{split}
    \end{equation*}
    \item If $U$ is sufficiently small, no other critical point exists: we will show that the Hessian has full rank, so the critical points of the function $\phi$ are isolated. By differentiating (\ref{eq_n_stable}) with respect to $u_{j'}$, we obtain
    $$  \left\langle  \frac{\partial w_l}{\partial u_{j'}}(u) , \frac{\partial N}{\partial u_j} (u)  \right\rangle +  \left\langle  w_l(u) , \frac{\partial^2 N}{\partial u_j\partial u_j'} (u)  \right\rangle = 0$$
   Then,
  \begin{equation*}
    \begin{split}
     \frac{\partial^2 \phi}{\partial s_j \partial s_{j'}} (u) = & (\Phi(u, v)-\alpha(u) )\cdot \frac{\partial^2 N}{\partial u_j \partial u_{j'}}(u) - \frac{\partial \alpha}{\partial u_{j'}}(u) \cdot \frac{\partial N}{\partial u_j}(u) \\
      = & - \frac{\partial \Phi}{\partial u_j'}(u, v) \cdot \frac{\partial N}{\partial u_j}(u).
    \end{split}
\end{equation*}
By part b) of the discussion following Lemma \ref{lem_hartman}, 
the matrix $\left(\frac{\partial^2 \Phi}{\partial u_{j} \partial u_{j'}}(u, v) \cdot N(u)\right)$ has full rank $k$. 
Therefore, $\Delta_s \phi(u)$ has full rank $k$.
\end{itemize}

Then, by the stationary phase (part b of Theorem \ref{t_asym}),
$$\left|I(\rho; u, v) -\rho^{-\frac{k}{2}}(-1)^{k}i^{\frac{k}{2}}\left(\det \left(\frac{\partial^2 \Phi}{\partial u_{j} \partial u_{j'}}(u, v) \cdot N(u)\right)\right)^{-\frac{1}{2}}\psi(u)\right| \leq D' \rho^{-\frac{k+1}{2}}$$
for a $D'>0$ independent of $\rho$, $u$, and $v$. Since $\psi(u) = 1$ for $ u \in \frac{1}{2}U$, continuing from (\ref{eq_nu_ex}),
\begin{equation*}
    \begin{split}
        & \left|\widehat{\nu}(\rho e_{d+1}) - \int \rho^{-\frac{k}{2}}(-1)^{k}i^{\frac{k}{2}}\left(\det \left(\frac{\partial^2 \Phi}{\partial u_{j} \partial u_{j'}}(u, v) \cdot N(u)\right)\right)^{-\frac{1}{2}} d\mu \circ \Phi(u, v) \right|\\
         \leq & \int D' \rho^{-\frac{k+1}{2}}  d\mu \circ \Phi(u, v).\\
    \end{split}
\end{equation*}

Since $$\det \left(\frac{\partial^2 \Phi}{\partial u_{j} \partial u_{j'}}(u, v) \cdot N(u) \right) \leq D$$ for a $D> 0$,
\begin{align*}
    \left|\int \rho^{-\frac{k}{2}}(-1)^{k}i^{\frac{k}{2}}\left(\det \left(\frac{\partial^2 \Phi}{\partial u_{j} \partial u_{j'}}(u, v) \cdot N(u)\right)\right)^{-\frac{1}{2}} d\mu \circ \Phi(u, v) \right| \geq & D^{-\frac{1}{2}}\rho^{-\frac{k}{2}}, \\
    \left|\int D' \rho^{-\frac{k+1}{2}}  d\mu \circ \Phi(u, v)\right| \leq &  D' \rho^{-\frac{k+1}{2}}.
\end{align*}
Therefore, $|\widehat{\nu}(\rho e_{d+1})|$ is bounded below by $2^{-1}D^{-\frac{1}{2}} \rho^{-\frac{k}{2}}$ for $\rho$ sufficiently large.
\end{proof}

\subsection{Existence of $T_s$}
\label{sec_ex}

\begin{proof}[Proof of Lemma \ref{l_Ts_ex}]
    One choice of $T_s$ fixes the first $n$ coordinates of $\Phi(u, v)$ and changes the last coordinate to $$(\Phi(u, v) - \alpha(s)) \cdot N(s) = (\alpha(u)- \alpha(s))\cdot N(s) + \sum_{l=1}^{d-k} v_l w_l(u) \cdot N(s).$$
    Alternatively,
    \begin{equation}
    \label{Ts_def}
    \begin{split}
         T_s(\Phi(u, v)) = \alpha_s(u) + \sum_{l=1}^{d-k} v_l w_{l, s}(u),
    \end{split}
    \end{equation}
    where 
    \begin{align*}
         \alpha_s(u) = & \alpha(u) + e_{d+1}[\alpha(u) \cdot (N(s) -e_{d+1}) - \alpha(s) \cdot N(s)], \\
         w_{l, s}(u) = & w_{l}(u) + e_{d+1} [w_{l}(u) \cdot (N(s) - e_{d+1})]. & & \qedhere
    \end{align*}
\end{proof}

\begin{remark}
     The new hypersurface $M_s := T_s(M)$ is also $d-k$ ruled with a coordinate chart $\Phi_s := T_s \circ \Phi$ if $s$ is sufficiently small. Since for all $l$ and $j$, $p = T_s(\Phi(u, v))$,
       $$\frac{\partial w_{l, s}}{\partial u_j}(u) \in T_{p}M_s,$$ 
       for $T_s$ defined in (\ref{Ts_def}), the normal vectors of $T_s(M)$ are stable along the rulings, so $M_s$ is also a smooth hypersurface of constant rank $k$.
\end{remark}

\section{Appendix}
\label{sec_app}

\subsection{Hausdorff and Fourier dimensions}
\label{hf_dim}

One notion of size often used in fractal geometry is the \textit{Hausdorff dimension}, which is defined as follows. For a set $A \subset \RR^n$, $s, \delta > 0$, 
$$\mathcal{H}^{s}_{\delta}(A) := \inf \left\{\left.\sum_{j} \text{diam}(E_j)^s \right| A \subset \bigcup_{j} E_j, \text{diam}(E_j) < \delta\right\},$$
and the \textit{$s$-dimensional Hausdorff measure} $\mathcal{H}^{s}(A) := \lim_{\delta \to 0} \mathcal{H}^{s}_{\delta}(A)$. The \textit{Hausdorff dimension} of $A$ is $\dim_H(A) := \sup \{s: \mathcal{H}^{s}(A) = \infty\} = \inf \{s: \mathcal{H}^{s}(A) = 0 \}.$

Frostman's lemma \cite[Theorem 2.7]{mattila_2015} offers an alternative characterization of the Hausdorff dimension.  For a Borel set $A \subset \RR^{n}$, 
$$\dim_H(A) = \sup \{s \in [0, n]: \exists \mu \in \mathcal{M}(A), I_s(\mu) := \int |\widehat{\mu}(\xi)|^2 |\xi|^{s-n} d\xi < \infty \},$$ where $I_s$ is the $s$\textit{-energy} of $\mu$.

For $\mu \in \mathcal{M}(A)$, if $\sup_{\xi \in \RR^{n}}|\xi|^{\frac{s}{2}}|\widehat{\mu}(\xi)| < \infty$ for an $s > 0$, $I_{s_0}(\mu) < \infty$ when $0<s_0 < s$. From the characterization above, $\dim_H(A) \geq s$.
Consequently, $\dim_F(A) \leq \dim_H(A)$, and the inequality is strict for some sets $A$. The set $A$ is \textit{Salem} if $\dim_F(A)=\dim_H(A)$. 



\subsection{Fourier dimension of the Cartesian product} We present a lemma that yields the Fourier dimension of cylindrical hypersurfaces.

\begin{lemma}
\label{lem_fd_cart}
Suppose that $S \subset \RR^n$, $T \subset \RR^m$ are compact, $\dim_F(S) < n$, and $\dim_F(T) < m$. Then,
\begin{enumerate}[a.]
    \item  
    \begin{equation}
    \label{eq48}
      \dim_F(S \times T) = \min\{\dim_F(S), \dim_F(T)  \},  
    \end{equation}
    \item 
    \begin{equation}
      \dim_F(S \times \RR^{m}) = \dim_F(S).  
    \end{equation}
\end{enumerate}
\end{lemma}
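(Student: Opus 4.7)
The plan is to prove each inequality by constructing measures explicitly: the upper bounds come from pushforward by the projection $\pi_1 : \RR^n \times \RR^m \to \RR^n$, and the lower bounds from (weighted) product constructions.

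For part (a), suppose $\mu \in \mathcal{M}(S \times T)$ satisfies $|\widehat{\mu}(\xi,\eta)| \lesssim (1+|(\xi,\eta)|)^{-s/2}$. Then $\pi_1^{\#}\mu \in \mathcal{M}(S)$ has $\widehat{\pi_1^{\#}\mu}(\xi) = \widehat{\mu}(\xi,0) \lesssim (1+|\xi|)^{-s/2}$, so $\dim_F(S) \geq s$; by symmetry $\dim_F(T) \geq s$, yielding $\dim_F(S \times T) \leq \min\{\dim_F(S), \dim_F(T)\}$. For the reverse inequality, pick $\mu \in \mathcal{M}(S)$ and $\nu \in \mathcal{M}(T)$ attaining rates $s$ and $t$ respectively. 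The Fourier transform of $\mu \times \nu \in \mathcal{M}(S \times T)$ factors as $\widehat{\mu}(\xi)\widehat{\nu}(\eta)$, and a short case split — $|\xi| \geq |\eta|$ using the decay of $\widehat{\mu}$ and boundedness of $\widehat{\nu}$, and $|\eta| > |\xi|$ symmetrically — combined with $(1+|(\xi,\eta)|) \asymp (1 + \max\{|\xi|,|\eta|\})$, gives $|\widehat{\mu \times \nu}(\xi,\eta)| \lesssim (1+|(\xi,\eta)|)^{-\min(s,t)/2}$.

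For part (b), the lower bound is analogous: let $\varphi \in C_c^\infty(\RR^m)$ be nonnegative and nonzero and set $\nu = \varphi(y)\,dy$; since $\widehat{\nu} = \widehat{\varphi}$ is Schwartz, the same case analysis shows $\mu \times \nu$ attains rate $\dim_F(S)$. For the upper bound, the projection trick fails for a noncompact fiber, so my plan is to integrate out $y$ against a bump: given $\mu \in \mathcal{M}(S \times \RR^m)$ with decay rate $s$, define $\tilde\mu \in \mathcal{M}(S)$ by
$$\int f\,d\tilde\mu := \int f(x)\,\varphi(y)\,d\mu(x,y)$$
for a nonnegative $\varphi \in C_c^\infty(\RR^m)$. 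Choosing $\varphi \equiv 1$ on a sufficiently large box ensures $\tilde\mu \neq 0$, since $\mu$ is nonzero and finite. Fourier inversion of $\varphi$ together with Fubini yields the convolution identity
$$\widehat{\tilde\mu}(\xi) = \int \widehat{\varphi}(\eta)\,\widehat{\mu}(\xi,-\eta)\,d\eta,$$
and splitting the $\eta$-integral at $|\eta| = |\xi|/2$, then using the Schwartz decay of $\widehat{\varphi}$ on the far region and $|\widehat{\mu}(\xi,-\eta)| \lesssim (1+|\xi|)^{-s/2}$ on the near region (where $|(\xi,-\eta)| \asymp |\xi|$), gives $|\widehat{\tilde\mu}(\xi)| \lesssim (1+|\xi|)^{-s/2}$, hence $\dim_F(S) \geq s$.

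The main obstacle I anticipate is the convolution estimate for part (b)'s upper bound: one must first ensure $\tilde\mu$ is genuinely nonzero (handled by choosing $\varphi$ against the support of $\mu$) and then verify that the Schwartz decay of $\widehat{\varphi}$ dominates the far-region integration, including the volume factor $\sim |\xi|^m$, without sacrificing any portion of the decay exponent $s$. Both points are technical but routine; once they are in place, all four bounds follow by standard manipulations and the hypotheses $\dim_F(S) < n$, $\dim_F(T) < m$ are used only to exclude degenerate cases where one factor already has positive Lebesgue measure.
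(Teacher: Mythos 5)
Your argument is correct, and for part (a) it coincides with the paper's: the upper bound via pushforward under the coordinate projections, and the lower bound via the product measure with a two-case split at $|\xi_1|\gtrless\tfrac12|\xi|$. The one place you genuinely diverge from the paper is the upper bound in part (b). The paper handles (b) in a single line ("the proof is similar, taking $\mu_T\in\mathcal{S}(\RR^m)$"), implicitly reusing the projection argument; you instead introduce a cutoff $\tilde\mu$ defined by $\int f\,d\tilde\mu=\int f(x)\varphi(y)\,d\mu(x,y)$, derive the convolution identity $\widehat{\tilde\mu}(\xi)=\int\widehat\varphi(\eta)\widehat\mu(\xi,-\eta)\,d\eta$, and estimate by splitting at $|\eta|=|\xi|/2$. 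That computation is correct (the far region is indeed controlled by the Schwartz decay of $\widehat\varphi$ absorbing the volume factor $|\xi|^m$, and the nonvanishing of $\tilde\mu$ follows from inner regularity of the finite measure $\mu$). However, the motivation — that "the projection trick fails for a noncompact fiber" — is a misreading of where compactness is actually needed. The paper uses compactness of $S$ and $T$ only to state the \emph{equality} $\mathrm{spt}\,\pi_S^{\#}\mu=\pi_S(\mathrm{spt}\,\mu)$; for the proof one only needs the \emph{containment} $\mathrm{spt}\,\pi_S^{\#}\mu\subset S$, which holds because $\pi_S^{\#}\mu(U)=\mu(U\times\RR^m)=0$ for every open $U$ disjoint from the closed set $S$, with no hypothesis on $T$. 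And $\widehat{\pi_S^{\#}\mu}(\xi_1)=\widehat\mu(\xi_1,0)$ is valid for any finite measure. So the projection argument carries over verbatim, and your cutoff is extra machinery for a non-problem; it does, on the other hand, buy you a version of the statement that would survive replacing $\RR^m$ by a closed noncompact set whose projection might behave badly, which is a reasonable thing to want but not what the lemma asks.

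Two smaller points worth noting. First, "pick $\mu\in\mathcal{M}(S)$ ... attaining rate $s$" should be "attaining rate $s-\eps$ for arbitrary $\eps>0$," since the supremum in the definition of $\dim_F$ need not be attained; the paper is careful about this and you should be too, though the fix is routine. Second, your closing remark that the hypotheses $\dim_F(S)<n$, $\dim_F(T)<m$ serve "only to exclude degenerate cases where one factor already has positive Lebesgue measure" is speculative and not quite what is going on: $\dim_F(S)=n$ does not force $S$ to have positive measure. What the strict inequalities actually rule out, in the projection step, is a measure on the product whose Fourier decay exponent $r$ exceeds $n$; without them the inference "$r\le\dim_F(S)$" would break at the endpoint because $\dim_F(S)$ is capped at $n$ by definition.
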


We refer readers to the discussion in \cite{falconer} on the Hausdorff dimension of the Cartesian product.

\begin{proof}
    For part a, let $s := \dim_F(S) < n$, $t := \dim_F(T) < m$. We write $x = (x_1, x_2) \in \RR^{n} \times \RR^{m}$.
    
    \begin{itemize}
        \item First, we show that $$\dim_F(S \times T) \geq \min\{s, t \}.$$

    Let $\eps > 0$. Let $\mu_S \in \mathcal{M}(S)$, with $|\widehat{\mu_S}(\xi_1)| \leq C_1 |\xi_1|^{-\frac{s}{2}+\eps }$ for $C_1 > 0$, $\xi_1 \in \RR^{n}$, $|\xi_1| \geq 1$. 
    
    Similarly, let $\mu_T \in \mathcal{M}(T)$, with $|\widehat{\mu_T}(\xi_2)| \leq C_2 |\xi_2|^{-\frac{t}{2}+\eps}$ for $C_2 > 0$, $\xi_2 \in \RR^m$, $|\xi_2| \geq 1$. We consider the measure $\mu_S \times \mu_T \in \mathcal{M}(S \times T)$.  For $\xi = (\xi_1, \xi_2) \in \RR^{n} \times \RR^{m}$,
    \begin{equation*}
        \begin{split}
            \widehat{\mu_S \times \mu_T} (\xi) & = \int_{\RR^{n+m}} e^{- 2 \pi i x \cdot \xi} d(\mu_S \times \mu_T) (x) \\
            = & \left(\int_{\RR^{n}} e^{-2\pi i x_1 \cdot \xi_1} d\mu_S(x_1) \right) \left(\int_{\RR^{m}} e^{-2\pi i x_2 \cdot \xi_2} d\mu_T(x_2) \right) \\
            = & \widehat{\mu_S}(\xi_1)\widehat{\mu_T}(\xi_2).
        \end{split}
    \end{equation*}
    There are two cases to consider.

    \begin{itemize}
        \item Case 1: $|\xi_1| \geq \frac{1}{2}|\xi|$. Then, $$|\widehat{\mu_S \times \mu_T} (\xi)| = |\widehat{\mu_S}(\xi_1)\widehat{\mu_T}(\xi_2)| \leq C_1 |\xi_1|^{-\frac{s}{2}+\eps} \leq C_1 2^{\frac{s}{2}-\eps} |\xi|^{-\frac{s}{2}+\eps}.$$
        \item  Case 2: $|\xi_2| \geq \frac{1}{2}|\xi|$.
    Similarly, $|\widehat{\mu_S \times \mu_T} (\xi)| \leq C_2 2^{\frac{t}{2}-\eps} |\xi|^{-\frac{t}{2}+\eps}.$
    \end{itemize}
    
    This shows that $\dim_F(S \times T) \geq \min\{s-2\eps, t-2\eps  \}$, then we can let $\eps \to 0$.

    \item Next, we show that 
    $$\dim_F(S \times T) \leq \min\{s, t\}.$$

    Let $\mu \in \mathcal{M}(S \times T)$, and suppose that $|\widehat{\mu}(\xi)| \leq C |\xi|^{-\frac{r}{2}}$ for $\xi = (\xi_1, \xi_2) \in \RR^{n} \times \RR^{m}$. We will show that $r \leq s$ and $r \leq t$. 
    \begin{itemize}
        \item Let $\pi_S : S \times T \to S$ be the projection map to the first $n$ coordinates and $\pi_T : S \times T \to T$ be the projection map to the last $m$ coordinates. Since $S$ and $T$ are compact, for $\mu \in \mathcal{M}(S \times T)$, $$\text{spt} \pi_S^{\#} \mu = \pi_S(\text{spt} \mu) \subset S,$$ $$\text{spt} \pi_T^{\#} \mu = \pi_T(\text{spt} \mu) \subset T.$$
        

        
        \item First, consider $\xi = (\xi_1, 0)$ (so $\xi_2 = 0$). Then,
        \begin{equation}
        \label{eq84}
            \begin{split}
                \widehat{\mu}(\xi) & = \int e^{-2 \pi i x \cdot \xi} d\mu(x) \\
                & = \int e^{-2 \pi i x_1 \cdot \xi_1} d\mu(x) \\
                & = \int e^{-2 \pi i \pi_S(x) \cdot \xi_1} d\mu(x) \\
                & = \int e^{-2 \pi i y \cdot \xi_1} d\pi_S^{\#}\mu(y) \\
                & = \widehat{\pi_S^{\#}\mu} (\xi_1)
            \end{split}
        \end{equation}
        Since $\pi_S^{\#}\mu \in \mathcal{M}(S)$, $\dim_F(S) = s$, $|\widehat{\pi_S^{\#}\mu} (\xi_1)| = |\widehat{\mu}(\xi)| \leq C |\xi|^{-\frac{r}{2}} = C |\xi_1|^{-\frac{r}{2}}$, then $r \leq s$.
        \item Next, consider $\xi = (0, \xi_2)$ (so $\xi_1 = 0$). Similar to (\ref{eq84}), $|\widehat{\mu}(\xi)| = |\widehat{\pi_T^{\#}\mu} (\xi_2)|$.  As $\pi_T^{\#}\mu \in \mathcal{M}(T)$, $\dim_F(T) = t$, $|\widehat{\pi_T^{\#}\mu} (\xi_2)| = |\widehat{\mu}(\xi)| \leq C |\xi|^{-\frac{r}{2}} = C |\xi_2|^{-\frac{r}{2}}$, we have $r \leq t$.
    \end{itemize}
     \end{itemize}
     The proof for part b) is similar, where we can take $\mu_T \in \mathcal{S}(\RR^{m})$.
\end{proof}

\subsection{Reduction to compactly supported measure}

We present a lemma that allows us to assume that the measure $\mu \in \mathcal{M}(M)$ we study has smaller support in a smaller closed set on the manifold $M$.

\begin{lemma}\cite[Theorem 1]{eps}
\label{lem_reduct}
     Suppose that $\mu_0 \in \mathcal{M}(\RR^n)$, $\sup_{\xi \in \RR^{n}}|\xi|^{\alpha}|\widehat{\mu_0}(\xi)| < \infty$ for $\alpha >0$. Let $f \in \mathcal{S}(\RR^{n})$ with $f \geq 0$, and $\mu \in \mathcal{M}(\RR^n)$ such that $d\mu = fd\mu_0$. Then, $|\widehat{\mu}(\xi)| \leq c_{\mu} |\xi|^{-\alpha}$ for a $c_{\mu}>0$.
\end{lemma}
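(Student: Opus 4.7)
The plan is to exploit the convolution identity $\widehat{f \mu_0} = \widehat{f} * \widehat{\mu_0}$ and split the convolution integral into a region where $\widehat{f}$ is very small (because we are far from its ``center'') and a region where the assumed polynomial decay of $\widehat{\mu_0}$ is already effective. First I would verify that $f \in \mathcal{S}(\RR^n)$ implies $\widehat{f} \in \mathcal{S}(\RR^n)$, so in particular $|\widehat{f}(\zeta)| \leq C_N (1+|\zeta|)^{-N}$ for every $N \in \N$, and $\widehat{f} \in L^1(\RR^n)$. Since $f$ is bounded, $\mu = f\mu_0$ is a finite measure, $\widehat{\mu}$ is continuous and bounded by $\norm{\mu}_1$, so it suffices to establish the estimate $|\widehat{\mu}(\xi)| \lesssim |\xi|^{-\alpha}$ for $|\xi|$ large; combined with the trivial bound for $|\xi|$ small this yields the claim.

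The key step is the splitting
\begin{equation*}
\widehat{\mu}(\xi) = \int_{|\eta|\leq |\xi|/2} \widehat{f}(\xi-\eta) \widehat{\mu_0}(\eta)\,d\eta + \int_{|\eta|> |\xi|/2} \widehat{f}(\xi-\eta) \widehat{\mu_0}(\eta)\,d\eta =: A(\xi)+B(\xi).
\end{equation*}
For $B(\xi)$, note that on the range of integration $|\eta| \geq |\xi|/2$, so by hypothesis $|\widehat{\mu_0}(\eta)| \leq C_0 |\eta|^{-\alpha} \leq C_0 2^{\alpha}|\xi|^{-\alpha}$, and hence
\begin{equation*}
|B(\xi)| \leq C_0 2^{\alpha}|\xi|^{-\alpha}\int_{\RR^n}|\widehat{f}(\xi-\eta)|\,d\eta = C_0 2^{\alpha}\norm{\widehat{f}}_{L^1}|\xi|^{-\alpha},
\end{equation*}
which is the decay we want.

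For $A(\xi)$, on the range of integration $|\xi-\eta| \geq |\xi|/2$; since $\widehat{\mu_0}$ is uniformly bounded by $\norm{\mu_0}_1$, I would use Schwartz decay of $\widehat{f}$ by writing, for any $N > \alpha + n$,
\begin{equation*}
|\widehat{f}(\xi-\eta)| \leq C_N(1+|\xi-\eta|)^{-N} \leq C_N (1+|\xi|/2)^{-(N-n-1)}(1+|\xi-\eta|)^{-(n+1)}.
\end{equation*}
Integrating the factor $(1+|\xi-\eta|)^{-(n+1)}$ over $\RR^n$ gives a finite constant, so
\begin{equation*}
|A(\xi)| \lesssim \norm{\mu_0}_1 (1+|\xi|)^{-(N-n-1)} \lesssim |\xi|^{-\alpha}
\end{equation*}
for $|\xi| \geq 1$, upon choosing $N$ sufficiently large. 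Combining the two bounds yields $|\widehat{\mu}(\xi)| \lesssim |\xi|^{-\alpha}$ for $|\xi| \geq 1$, and for $|\xi| < 1$ the trivial bound $|\widehat{\mu}(\xi)| \leq \norm{\mu}_1 \leq \norm{\mu}_1 |\xi|^{-\alpha} \cdot |\xi|^{\alpha}$ is not useful as stated, but since the claim $|\widehat{\mu}(\xi)| \leq c_\mu |\xi|^{-\alpha}$ is nontrivial only for large $|\xi|$, enlarging $c_\mu$ absorbs small $\xi$.

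I expect no serious obstacle: the only subtle point is the dyadic splitting and choosing the Schwartz exponent $N$ large enough, which is a standard technique for passing Fourier decay estimates through multiplication by a smooth weight. No stationary phase, no geometry — purely a soft convolution argument.
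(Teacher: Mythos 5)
Your proof is correct and is essentially the same argument as the paper's: you write $\widehat{\mu}$ as a convolution $\widehat{f}*\widehat{\mu_0}$ and split at $|\eta|\sim|\xi|/2$, handling one region via the assumed polynomial decay of $\widehat{\mu_0}$ together with $\widehat{f}\in L^1$, and the other via the Schwartz decay of $\widehat{f}$ together with $\|\widehat{\mu_0}\|_{L^\infty}\leq\|\mu_0\|_1$ (the paper simply places the convolution variable on $\widehat{f}$ rather than on $\widehat{\mu_0}$, so the roles of its two regions are exchanged relative to your $A$ and $B$, but the estimates are the same).
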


\begin{proof}
    Note that
    \begin{equation*}
    \begin{split}
        |\widehat{\mu}(\xi)| & = |\widehat{\mu_0} * \widehat{f}(\xi)| \\
        & = \left|\int \widehat{\mu_0}(\xi - \eta) \widehat{f}(\eta) d\eta  \right| \\
        & \leq \left|\int_{|\eta| \leq \frac{|\xi|}{2}} \widehat{\mu_0}(\xi - \eta) \widehat{f}(\eta) d\eta  \right| + \left|\int_{|\eta| \geq \frac{|\xi|}{2}} \widehat{\mu_0}(\xi - \eta) \widehat{f}(\eta) d\eta  \right|.
    \end{split}
    \end{equation*}

    For the first integral, since $|\eta| \leq \frac{|\xi|}{2}$, $|\xi-\eta| \geq \frac{|\xi|}{2}$, and $\widehat{f}$ is integrable, the integral is bounded above by a constant multiple of $|\xi|^{-\alpha}$. For the second integral, we apply the bounds $\norm{\widehat{\mu_0}}_{L^{\infty}}=1$ and $\int_{|\eta| \geq \frac{|\xi|}{2}} |\widehat{f}(\eta)| d\eta \leq c_m |\xi|^{-m}$ for $m \in \N$ with a $c_m > 0$ since $\widehat{f} \in \mathcal{S}(\RR^n)$. If we choose $m > \alpha$, the sum of two bounds is bounded by a constant multiple of $|\xi|^{-\alpha}$ for large $|\xi|$.
\end{proof}

\subsection{Morse's lemma}
\label{sec62}
The version stated below generalizes the one shown in \cite[Section 8.2]{Stein1993}. 

\begin{lemma}
\label{lem_morse}
Suppose that $f \in C^{\infty}(\RR^d \times \RR^{d})$, $f(0, t)=0$, $\nabla_x f(0, t) = 0$, and that $\Delta_xf(x, t)$ has a constant rank of $n$. Then, there exist neighborhoods $V, W$ of $0$ and a smooth $\tau: V \times W \to \RR^{d}$ such that $\tau(0, t)=0$, $\det {\bf J}_x \tau(x, t) \neq 0$, and
\begin{equation}
\label{eq_f_qmt}
    f(x, t) = Q_{m, n}(\tau(x, t)),
\end{equation}
where 
\begin{equation}
\label{eq_Qm}
  Q_{m, n}(y) : =\sum_{j=1}^{m} y_j^2 - \sum_{j=m+1}^{n} y_j^2  
\end{equation}  with $0 \leq m \leq n \leq d$.
\end{lemma}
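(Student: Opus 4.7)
The plan is to proceed by induction on the rank $n$, treating $t$ as a smooth parameter throughout. The base case $n = 0$ is immediate: $\Delta_x f \equiv 0$ in a neighborhood of $(0, 0)$ combined with $f(0, t) = 0$ and $\nabla_x f(0, t) = 0$ forces $f \equiv 0$ near the origin by Taylor's theorem with integral remainder, so $\tau(x, t) := x$ works with $Q_{0, 0} \equiv 0$.

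For the inductive step with $n \geq 1$, a linear change of coordinates in $x$ diagonalizing $\Delta_x f(0, 0)$ lets me assume without loss of generality that $\partial_{x_1}^2 f(0, 0) > 0$; the opposite-sign case is symmetric. Since $\partial_{x_1} f(0, t) \equiv 0$ while $\partial_{x_1}^2 f(0, 0) \neq 0$, the implicit function theorem yields a smooth $\phi(x', t)$, with $\phi(0, 0) = 0$ and $x' := (x_2, \ldots, x_d)$, such that $\partial_{x_1} f(\phi(x', t), x', t) \equiv 0$. Translating by $x_1 \mapsto x_1 - \phi(x', t)$, I may assume $\partial_{x_1} f(0, x', t) \equiv 0$. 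Taylor's formula in $x_1$ then gives $f(x, t) = f_1(x', t) + x_1^2 g(x, t)$, where $f_1(x', t) := f(0, x', t)$ and $g(x, t) := \int_0^1 (1 - s)\,\partial_{x_1}^2 f(s x_1, x', t)\,ds$ is smooth with $g(0, 0) = 1 > 0$. The substitution $y_1 := x_1 \sqrt{g(x, t)}$ has nonzero $x_1$-Jacobian near the origin by continuity, defining a smooth change of the first coordinate that produces the partial normal form $f(x, t) = y_1^2 + f_1(x', t)$.

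To invoke the inductive hypothesis on $f_1$, I must verify that $\Delta_{x'} f_1$ has constant rank $n - 1$ near the origin. This is the subtle point: Hessian rank is not coordinate-invariant away from critical points, but the change of variables above is engineered so that $\partial_{x_1} f$ vanishes on the hypersurface $\{y_1 = 0\}$. Consequently, the Hessian transformation law $\tilde H = J^T H J + (\partial_{x_1} f) \cdot R$ (where $R$ collects second derivatives of the coordinate change) reduces to $\tilde H = J^T H J$ along $y_1 = 0$, preserving rank. Since the new Hessian is block diagonal $\mathrm{diag}(2, \Delta_{x'} f_1(x', t))$ there, we obtain $1 + \mathrm{rank}\,\Delta_{x'} f_1(x', t) = n$ in a neighborhood. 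Sylvester's law of inertia at the critical point $0$ identifies the signature of $\Delta_{x'} f_1(0, 0)$ as $(m - 1, n - m)$.

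Applying the inductive hypothesis to $f_1$ produces a smooth $\tau_1(x', t)$ with $\det {\bf J}_{x'} \tau_1 \neq 0$ and $f_1 = Q_{m - 1, n - 1}(\tau_1)$. Assembling $\tau := (y_1, \tau_1)$, with components permuted so that the $m$ positive squares precede the $n - m$ negative ones, yields $f = Q_{m, n}(\tau)$; the Jacobian ${\bf J}_x \tau$ is block triangular with nonzero diagonal entries, hence nonsingular in a neighborhood. The main obstacle throughout is the rank-propagation step in the third paragraph; everything else follows from standard parameter-dependent applications of the implicit function theorem and the Morse-splitting idea.
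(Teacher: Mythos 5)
Your proof is correct, and it takes a genuinely different route from the paper's. The paper follows the classical iterative completion-of-squares scheme (after an orthogonal change in $(x_r,\dots,x_d)$ one completes the square in $x_r$, and when $r>n$ one invokes the constant-rank hypothesis to kill the residual quadratic form), whereas you induct on the rank $n$: locate the one-variable critical locus $x_1 = \phi(x',t)$ via the implicit function theorem, translate to it, split off $y_1^2$ by a one-variable Taylor expansion plus scaling, and then apply the inductive hypothesis to $f_1(x',t) := f(\phi(x',t),x',t)$.

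The most valuable part of your argument is the explicit handling of rank propagation. As you note, $\operatorname{rank}\Delta_x f$ is not a priori invariant under nonlinear coordinate changes, since the Hessian transformation law carries first-derivative correction terms. You engineer the change of variables so that only the first coordinate is nonlinear; hence the entire correction is a multiple of $\partial_{x_1} f$, which vanishes on $\{y_1 = 0\}$ by the defining property of $\phi$. Evaluating on $\{y_1 = 0\}$ suffices because $\Delta_{x'} f_1$ is independent of $y_1$, and this cleanly yields $\operatorname{rank}\Delta_{x'} f_1 \equiv n-1$. By contrast, the paper's termination step asserts that when $r>n$ the transformed $\widetilde{g}_r$ still has constant rank $n$ and that this forces $\widetilde{f}_{j,k}(x,t)\equiv 0$; both assertions require the kind of justification you supply and are not spelled out there. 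Your approach also more transparently yields the uniform control on the neighborhoods $V,W$ in $t$, since each step is a single implicit-function-theorem application.

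One small imprecision: by the uniqueness in the implicit function theorem together with $\nabla_x f(0,t)\equiv 0$, you in fact get $\phi(0,t)\equiv 0$ for all small $t$, not merely $\phi(0,0)=0$. This stronger statement is what guarantees that the assembled $\tau$ satisfies $\tau(0,t)=0$, so it is worth stating explicitly.
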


\begin{remark}
    The number of positive eigenvalues of the matrix $\Delta_xf(0, 0)$ is $m$. By differentiating (\ref{eq_f_qmt}) twice,
\begin{equation}
    \label{eq_JDJD}
     [{\bf J}_x\tau(0, t)]^T\Delta Q_{m, n}(0)  {\bf J}_x\tau(0, t) = \Delta_x f(0, t).
\end{equation}
\end{remark}

\begin{proof}
We claim that the function $\tau$ can be expressed as
\begin{equation}
\label{mor_tau_ld1}
    \tau = L \circ \tau_d \circ \cdots \circ \tau_{1},
\end{equation}
where each $\tau_r$ is a change of variables in the first $d$ coordinates and where $L$ is a permutation of the same coordinates. $\tau_r$ is constructed inductively as follows: suppose that at step $r$, we have
$$\widetilde{g_r}(x^{}, t) := f(\tau_{1}^{-1}\circ \cdots \circ \tau_{r-1}^{-1}(x^{}, t)) = \pm [x^{}_1]^2 \pm \cdots \pm [x^{}_{r-1}]^2+ \sum_{j, k \geq r}^{d} x^{}_j x^{}_k \widetilde{f^{}_{j, k}}(x^{}, t),$$
where $$\widetilde{f^{}_{j, k}}(x^{}, t)= \int_{0}^{1}(1-s) \frac{\partial^2 \widetilde{g_r}}{\partial x^{}_j\partial x^{}_k}(sx^{}, t)ds, \widetilde{f^{}_{j, k}}(0, t) = \frac{1}{2} \frac{\partial^2  \widetilde{g_r}}{\partial x^{}_j \partial x^{}_k}(0, t).$$ 

When $r > n$, since $\Delta_x \widetilde{g_r} (x, t)$ only has rank $n$, all $\widetilde{f^{}_{j, k}}(x, t)=0$, and we are done. Otherwise, there exists
 an orthonormal matrix $O_r$, which is a linear change in the variables $x^{}_r, \cdots, x^{}_d$, such that 
$$g_r(y^{},t) :=  f(\tau_{1}^{-1}\circ \cdots \circ \tau_{r-1}^{-1}(O_r y^{}, t)) = \pm [y^{}_1]^2 \pm \cdots \pm [y^{}_{r-1}]^2+ \sum_{j, k \geq r}^{d} y^{}_j y^{}_k f^{}_{j, k}(y^{}, t),$$
where $$f^{}_{j, k}(y^{}, t)= \int_{0}^{1}(1-s) \frac{\partial^2 g_r}{\partial y^{}_j\partial y^{}_k}(sy^{}, t)ds, f^{}_{j, k}(0, t) = \frac{1}{2} \frac{\partial^2  g_r}{\partial y^{}_j \partial y^{}_k}(0, t)$$ and the additional condition that $f^{}_{r, r}(0, t)  \neq 0$. Then, for each $t$, we can perform a change of variables from $y^{}$ to $z^{}$ such that $z^{}_j = y^{}_j$ for $j \neq r$, and
\begin{equation}
\label{mor_var_ch}
    z^{}_r = [\pm f^{}_{r,r}(y^{}, t)]^{\frac{1}{2}}\left[y^{}_r + \sum_{j > r} \frac{y^{}_j
f^{}_{j, r}(y^{}, t)}{\pm f^{}_{r, r}(y^{}, t) } \right],
\end{equation}
where $\pm$ is the sign of $f^{}_{r, r}(0, t)$. This change of variables can be expressed as $z^{}=\sigma_{r}(y^{}, t)$ for $\sigma_r: V_r \times W_r \to \RR^{d}$, where $V_r, W_r$ are neighborhoods of $0$, $ f^{}_{r,r}$ does not change sign on $V_r \times W_r$, and $$\det {\bf J}_{y^{}} \sigma_{r}(y^{}, t) \neq 0.$$ Then, we let 
\begin{equation}
    \label{mor_taur}
    \tau_r(x^{}, t)= (\sigma_{r}(O_r^{-1} x^{}, t), t),
\end{equation}
and proceed to the next step by noting that $\tau_r^{-1}$ exists in a neighborhood of $0$.

From the construction above, there exist $V, W$  neighborhoods of $0$, such that for $(x, t) = (x^{}, t) \in V \times W$, for all $r$ from $1$ to $d$,  $f^{}_{r,r}(y^{}, t)$ does not change sign, and $$\det {\bf J}_{y^{}} \sigma_{r}(y^{}, t) \neq 0.$$ Therefore, $\tau$ is defined on $V \times W$. 
\end{proof}

Let $\tau_t(x) = \tau(x, t)$. For $k \geq 0$, $t \in W$, it is possible to bound $\norm{{\bf J}\tau_t}_{C^{k}(V)}$ and $\inf\{|\det {\bf J  }\tau_t(x)| x \in V \}$ via $\norm{f_t}_{C^{k+2}}$, $|\det {\bf J}\tau_t(0)|$, and the size of $V$ via (\ref{mor_tau_ld1}), (\ref{mor_var_ch}), and (\ref{mor_taur}).

\subsection{Oscillatory Integrals}

We refer readers to the complete proof of oscillatory integrals results in \cite{Stein1993}. In this section, we outline the key steps and bounds of the error terms.

For $f \in C^{\infty}(\RR^{n})$, $U$ open in $\RR^{n}$, and $k\geq 0$, we denote $\norm{f}_{C^k(U)}$, or $\norm{f}_{C^k}$ if the implication of the open set $U$ is clear, as the quantity
$$\sum_{|\beta| \leq k} \norm{\frac{\partial^{|\beta|}}{\partial y^{\beta}} f}_{L^{\infty}(U)}.$$

\begin{proposition}\cite[Chapter 8, Propositions 1 and 4]{Stein1993}
\label{p_local}
(Localization)
    \begin{enumerate}[a.]
        \item (Single variable edition) For $N \in \N$, there exist $C_N >0$, $p_N, q_N \in \N$ such that, for $\phi \in C^{\infty}(\RR)$, $\psi \in C^{\infty}_{0}(\RR)$ with $|\phi'(x)| \geq  1$ in $\text{spt }\psi$, $\lambda \geq 1$,
        $$\left|\int e^{i \lambda \phi(x)} \psi(x) dx\right| \leq   C_N\lambda^{-N} |\text{spt }\psi|\norm{\psi}_{C^{N}}^{p_N}\norm{\phi''}_{C^{N-1}}^{q_N}$$
        \item (Multivariate edition) For $N \in \N$, there exist $q_N, r_N\in \N$, such that for $\phi \in C^{\infty}(\RR^n)$, $\psi \in C^{\infty}_{0}(\RR^n)$ with $|\nabla \phi(x)| \geq  1$ in $\text{spt }\psi$,  $\Lambda$ being the maximum absolute values of the eigenvalues of $\Delta \phi(x')$ for all $x' \in \text{spt } \psi$, $\lambda \geq 1$, there exists $C_{N, \psi}>0$ with
        $$\left|\int e^{i \lambda \phi(x)} \psi(x) dx\right| \leq   C_{N, \psi}\lambda^{-N} (1+\Lambda)^{r_N} 
         \max_{\xi \in \mathbb{S}^{n-1}} \norm{\frac{\partial^2 \phi}{\partial \xi^2}}_{C^{N-1}}^{q_N} .$$
    \end{enumerate}
\end{proposition}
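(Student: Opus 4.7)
The plan is to prove both parts by the non-stationary phase technique: construct a first-order differential operator $L$ (depending on $\phi$ and $\lambda$) satisfying $L(e^{i\lambda \phi}) = e^{i\lambda \phi}$, then integrate by parts $N$ times to transfer derivatives off the rapidly oscillating exponential and onto the amplitude $\psi$. Each application produces a gain of $\lambda^{-1}$, and the compact support of $\psi$ ensures no boundary terms appear.

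For part (a), I would take $Lf = (i\lambda \phi'(x))^{-1} f'(x)$, which is well-defined on $\text{spt}\,\psi$ by the hypothesis $|\phi'| \geq 1$ and satisfies $L(e^{i\lambda \phi}) = e^{i\lambda \phi}$. Its formal transpose is $L^\top f = -\frac{d}{dx}\bigl(f/(i\lambda \phi')\bigr)$, and integration by parts yields
$$\int e^{i \lambda \phi(x)} \psi(x)\, dx = \int e^{i\lambda \phi(x)} (L^\top)^N \psi(x)\, dx.$$
By induction on $N$, $(L^\top)^N \psi$ expands as $\lambda^{-N}$ times a finite sum of terms of the form $(\phi')^{-m} \prod_{j}(\phi')^{(\alpha_j)} \cdot \psi^{(\gamma)}$ with $\gamma \leq N$ and $\sum_j \alpha_j + \gamma = N$. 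The hypothesis $|\phi'| \geq 1$ bounds each factor $(\phi')^{-m}$ by $1$, while every derivative of $\phi'$ appearing has order at most $N-1$ as a derivative of $\phi''$, hence is bounded by $\norm{\phi''}_{C^{N-1}}$. Taking absolute values inside and integrating over $\text{spt}\,\psi$ produces the factor $|\text{spt}\,\psi|$, with $p_N, q_N$ read off from the combinatorial count of terms.

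For part (b), the natural analog is $Lf = (i\lambda |\nabla \phi|^2)^{-1} \langle \nabla \phi, \nabla f \rangle$, which is well-defined on $\text{spt}\,\psi$ by $|\nabla \phi| \geq 1$ and again satisfies $L(e^{i\lambda \phi}) = e^{i\lambda \phi}$. Its transpose is
$$L^\top f = -\frac{1}{i\lambda} \sum_{j=1}^{n} \partial_j\!\left(\frac{\partial_j \phi}{|\nabla \phi|^2}\, f\right).$$
Applying $(L^\top)^N$ to $\psi$ expands into $\lambda^{-N}$ times a sum of terms, each a product of a derivative of $\psi$ of order at most $N$ and repeated derivatives of the rational expression $\partial_j \phi / |\nabla \phi|^2$. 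Every such differentiation introduces at most one new second (or higher) derivative of $\phi$ in the numerator and at most one additional power of $|\nabla \phi|^{-2} \leq 1$ in the denominator. The entries $\partial_j \partial_k \phi$ are bounded pointwise on $\text{spt}\,\psi$ by $\Lambda$, by the spectral characterization, and higher mixed partials are expressed by polarization as linear combinations of derivatives of the directional second derivatives $\partial_\xi^2 \phi$, hence controlled by $\max_{\xi \in \mathbb{S}^{n-1}} \norm{\partial_\xi^2 \phi}_{C^{N-1}}$.

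The main obstacle is the bookkeeping in part (b): one must track how many factors produced by $(L^\top)^N$ are pure Hessian entries (each contributing at most $\Lambda$) versus higher-order directional derivatives (contributing to $\max_{\xi}\norm{\partial_\xi^2 \phi}_{C^{N-1}}^{q_N}$). An induction on $N$ shows that both counts grow at most linearly in $N$, so $r_N$ and $q_N$ can be chosen polynomially in $N$; the constant $C_{N,\psi}$ absorbs $|\text{spt}\,\psi|$ and the derivatives of $\psi$ up to order $N$. A subtle point is the reduction of arbitrary mixed partials $\partial^\alpha \phi$ with $|\alpha| \geq 2$ to directional quantities $\partial_\xi^k \phi$, which is handled by expressing any such mixed partial as a coordinate derivative of $\partial_\xi^2 \phi$ for an appropriate $\xi \in \mathbb{S}^{n-1}$, so no information beyond $\max_\xi \norm{\partial_\xi^2 \phi}_{C^{N-1}}$ and $\Lambda$ is required.
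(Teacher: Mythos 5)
Part (a) in your proposal is essentially identical to the paper's: you both use the transposed operator $^tD g = (i\lambda)^{-1}\frac{d}{dx}(g/\phi')$, iterate $N$ times, and read off the bound from the structure of the resulting terms.

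Part (b) is where you take a genuinely different route. The paper does \emph{not} introduce a multivariate operator. Instead it localizes: for each $x_0$ it sets $\xi_0 = \nabla\phi(x_0)/|\nabla\phi(x_0)|$ and observes, via the mean value theorem, that $\xi_0\cdot\nabla\phi \geq 1/2$ on a ball of radius $\geq (2(1+\Lambda))^{-1}$; it then covers $\mathrm{spt}\,\psi$ by finitely many such balls, picks a subordinate partition of unity $\psi=\sum_k\psi_k$, rotates coordinates in each piece so that $x_1\parallel\xi_k$, and applies part (a) to the inner $x_1$-integral with phase $x_1\mapsto\phi(x_1,\ldots)$, whose second derivative is exactly $\partial_{\xi_k}^2\phi$. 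The $\Lambda$-dependence then comes geometrically, from the cardinality of the cover and the derivatives of the partition of unity, while the $\max_\xi\norm{\partial_\xi^2\phi}_{C^{N-1}}$ factor is inherited directly from part (a). Your approach — the transpose of $Lf = (i\lambda|\nabla\phi|^2)^{-1}\langle\nabla\phi,\nabla f\rangle$, iterated $N$ times — is the other standard non-stationary phase argument and is also correct, but the bookkeeping moves elsewhere: you must track how derivatives distribute over the vector field $V=\nabla\phi/|\nabla\phi|^2$ (noting that $|V|\leq 1$ and that each differentiation of a $V$-factor introduces one new derivative of order $\geq 2$), and you must invoke polarization to reduce mixed partials $\partial^\beta\partial_j\partial_k\phi$ to the directional quantities $\partial^\beta\partial_\xi^2\phi$. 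The paper's route has the advantage of re-using part (a) with no further combinatorics and making the origin of the $(1+\Lambda)^{r_N}$ factor transparent; yours avoids the partition of unity and keeps the argument coordinate-free. One small imprecision in your sketch: a mixed partial is not in general ``a coordinate derivative of $\partial_\xi^2\phi$ for an appropriate $\xi$''; as you say earlier it is a \emph{linear combination} of such quantities by polarization, and that is the statement you actually need.
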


\begin{proof}
    \begin{enumerate}[a.]
        \item  Using integration by parts $N$ times, the integral can be written as
        \begin{equation*}
             \int e^{i \lambda \phi(x)} (^tD)^N\{ \psi(x) \}dx
        \end{equation*}
        with $^tD g = i \lambda^{-1} \frac{d}{d x}\left(\frac{g}{\phi'}\right)$. Note that
        \begin{equation*}
           \lambda^{N} (^tD)^N g = \frac{g^{(N)}}{(\phi')^N} + \sum_{r=1}^{N}  (\phi')^{-(N+r)} F_{N, r}(g, g', \cdots, g^{(N-1)}, \phi'', \cdots, \phi^{(N+1)})
        \end{equation*} for some polynomials $F_{N, r}$. Therefore, the integral is bounded above by
        \begin{equation*}
            C_N\lambda^{-N} |\text{spt }\psi|\norm{\psi}_{C^{N}}^{p_N}\norm{\phi''}_{C^{N-1}}^{q_N}
        \end{equation*}
        for $C_N > 0$, $p_N, q_N \in \N$.
        \item For each $x_0$ in the support of $\psi$, there is a unit vector $\xi_0 = (\nabla \phi)(x_0) / \norm{(\nabla \phi)(x_0)}$ and a ball centered $B(x_0)$ at $x_0$ with
        \begin{equation}
        \label{ball_cond}
            \xi_0 \cdot (\nabla \phi)(x) \geq 2^{-1}
        \end{equation}
        if $x \in B(x_0)$. We claim that $B(x_0)$ can be chosen with the radius bounded below by a uniform constant depending on $\phi$. By the mean-value theorem, there exists $x'$ on the line segment joining $x_0$ and $x$, such that 
        $$| \xi_0 \cdot (\nabla \phi)(x) -  \xi_0 \cdot (\nabla \phi)(x_0)| = |(x-x_0) \Delta \phi(x') \xi_0|.$$
        If $$|x-x_0| \leq (2(1+\Lambda))^{-1},$$
        then (\ref{ball_cond}) holds.
        
        We choose a finite cover $\{B_k\} \subset \{B(x_0)\}$, and write 
        $\psi = \sum_{k} \psi_k$ for $\text{spt }\psi_k \subset B_k$. For each $B_k$, we choose a coordinate system where $x_1$ is parallel to $\xi_k$. Then,
        $$\int e^{i \lambda \phi(x)} \psi_k(x) dx = \int \left(  \int e^{i \lambda \phi(x_1, \cdots, x_n)} \psi_{k}(x_1, \cdots, x_n) dx_1\right)dx_2 \cdots dx_n,$$
        and we can apply part a) to the inner integral and bound it above by
        $$ C_N\lambda^{-N} |\text{spt }\psi_k|\norm{\psi_k}_{C^{N}}^{p_N}\norm{\frac{\partial^2 \phi}{\partial x_1^2}}_{C^{N-1}}^{q_N}.$$
        When all $k$'s are summed, a bound on the original integral is
         $$ C_N'\lambda^{-N} |\text{spt }\psi|\norm{\psi}_{C^{N}}^{p_N} (1+\Lambda)^{r_N} 
         \max_{\xi \in \mathbb{S}^{n-1}} \norm{\frac{\partial^2 \phi}{\partial \xi^2}}_{C^{N-1}}^{q_N} $$
         for $C_N' >0$, and $r_N \in \N$. \qedhere
    \end{enumerate}
\end{proof}

\begin{lemma}\cite[Chapter 8, Proposition 6]{Stein1993} Let $Q_{m, n}(y)$ be defined in (\ref{eq_Qm}).
    \begin{enumerate}[a.]
    \item  If $\eta \in C^{\infty}_{0}(\RR^{n})$,
        \begin{equation}
        \label{eq_int_Qmyl}
            \left|\int_{\RR^{n}} e^{i \lambda Q_{m, n}(y)} y^{l} \eta(y) dy \right| \leq A_l \lambda^{-\frac{n+|l|}{2}},
        \end{equation}
        where $A_l>0$, $l \in \ZZ^{n}$, $l_j \geq 0$, and $|l| = \sum_{j=1}^{n} l_j$.
    \item  If $g \in \mathcal{S}(\RR^{n})$ and there exists $\delta > 0$, such that $g(y)=0$ for $y \in B(0, \delta)$, then for $N \in \N$, there exists $B_N > 0$ such that
        \begin{equation}
        \label{eq_int_Qmg}
            \left|\int_{\RR^{n}} e^{i \lambda Q_{m, n}(y)} g(y) dy \right| \leq B_N \lambda^{-N}.
        \end{equation}
    \end{enumerate}
\end{lemma}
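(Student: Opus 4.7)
The plan is to exploit the separability of the quadratic phase $Q_{m,n}(y) = \sum_{j=1}^n \epsilon_j y_j^2$, with $\epsilon_j = +1$ for $j \le m$ and $\epsilon_j = -1$ for $m < j \le n$, reducing both parts to iterated one-dimensional quadratic oscillatory integrals.

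For part (a), I would argue by strong induction on $|l|$ that
\[
\left|\int_{\RR^n} e^{i\lambda Q_{m,n}(y)} y^l \eta(y)\,dy\right| \le A_l \lambda^{-(n+|l|)/2}
\]
for all $\eta \in C_0^\infty(\RR^n)$, with $A_l$ depending on finitely many $C^k$-seminorms of $\eta$. The base case $|l|=0$ is the standard $n$-dimensional stationary phase bound; by Fubini it reduces to the one-dimensional estimate $|\int_\RR e^{i\lambda\epsilon y^2}\varphi(y)\,dy| \lesssim \lambda^{-1/2}$, itself proved by writing $\varphi(y) = \varphi(0) + y\,h(y)$, evaluating the first piece via the Fresnel integral $\int e^{i\lambda\epsilon y^2}dy = \sqrt{\pi/\lambda}\,e^{i\pi\epsilon/4}$, and handling the second by one integration by parts using the identity
\[
y\,e^{i\lambda\epsilon y^2} = \frac{1}{2i\lambda\epsilon}\,\partial_y e^{i\lambda\epsilon y^2}.
\]
For the inductive step with $|l| \ge 1$, choose a coordinate $j$ with $l_j \ge 1$; the same identity combined with integration by parts yields
\[
\int e^{i\lambda Q_{m,n}} y^l\,\eta\,dy = -\frac{1}{2i\lambda\epsilon_j}\int e^{i\lambda Q_{m,n}}\bigl[(l_j-1)\,y^{l - 2e_j}\,\eta + y^{l-e_j}\,\partial_{y_j}\eta\bigr] dy,
\]
where the first summand is understood as zero when $l_j = 1$. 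Both resulting integrals have total $y$-degree $|l|-2$ or $|l|-1$; by the inductive hypothesis they are bounded by $A\lambda^{-(n+|l|-2)/2}$ and $A\lambda^{-(n+|l|-1)/2}$ respectively, and the extra $\lambda^{-1}$ prefactor upgrades both to $O(\lambda^{-(n+|l|)/2})$, closing the induction.

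For part (b), I would apply non-stationary phase. The only critical point of $Q_{m,n}$ is the origin, and because $g$ vanishes on $B(0,\delta)$ one has $|\nabla Q_{m,n}(y)| = 2|y| \ge 2\delta$ on $\mathrm{supp}(g)$. Introduce the first-order operators
\[
L f := \frac{1}{2i\lambda|y|^2}\sum_{j=1}^n \epsilon_j y_j\,\partial_{y_j} f, \qquad L^* f := -\frac{1}{2i\lambda}\sum_{j=1}^n \partial_{y_j}\!\left(\frac{\epsilon_j y_j}{|y|^2}\,f\right),
\]
for which a direct computation gives $L\,e^{i\lambda Q_{m,n}} = e^{i\lambda Q_{m,n}}$. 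Iterating the identity $\int e^{i\lambda Q_{m,n}} f\,dy = \int e^{i\lambda Q_{m,n}}\,L^* f\,dy$ (justified by integration by parts, with no boundary contribution since $g$ is Schwartz and vanishes near the origin) a total of $N$ times yields
\[
\int_{\RR^n} e^{i\lambda Q_{m,n}(y)} g(y)\,dy = \int_{\RR^n} e^{i\lambda Q_{m,n}(y)}\,(L^*)^N g(y)\,dy.
\]
Each application of $L^*$ contributes an additional factor of $\lambda^{-1}$. Since $g$ and hence every iterate $(L^*)^k g$ is supported in $\RR^n \setminus B(0,\delta)$, the singularity of $y_j/|y|^2$ at the origin causes no issue, and the Schwartz decay of $g$ ensures $(L^*)^N g \in L^1(\RR^n)$, from which the bound $|I_\lambda| \le B_N \lambda^{-N}$ follows.

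The main obstacle I anticipate lies in the bookkeeping for part (a): the induction must carry not only the multi-index $l$ but also uniform control over the $C^k$-seminorms of the amplitude, because each inductive step generates a new derivative $\partial_{y_j}\eta$ in one of the resulting integrals. A clean formulation of the inductive hypothesis should allow $\eta$ to range over all of $C_0^\infty(\RR^n)$, with $A_l$ depending explicitly on a $C^{|l|}$-type seminorm of the amplitude. For part (b) the only delicate point is verifying that the iterates $(L^*)^k g$ retain Schwartz decay at infinity, which follows from the smoothness of $y_j/|y|^2$ on $\{|y| \ge \delta\}$ together with the Leibniz rule applied to the product structure of $L^*$.
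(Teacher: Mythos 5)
Your proof is correct, but both parts take a genuinely different route from the paper's. The paper's argument for part (a) is a conical decomposition: it fixes a homogeneous partition of unity $\{\Omega_k\}$ subordinate to the cones $\Gamma_k = \{\,|y_k|^2 \geq |y|^2/(2n)\,\}$, integrates by parts repeatedly in the single coordinate $y_k$ on each cone via the transpose operator ${}^tD_k g = s_k(2i\lambda)^{-1}\partial_{y_k}(g/y_k)$, splits off a ball of radius $\epsilon$ where the integral is bounded trivially, and then balances the two pieces by setting $\epsilon = \lambda^{-1/2}$ — a direct scaling argument that treats all multi-indices $l$ simultaneously. You instead run a downward induction on $|l|$, trading one power of $y_j$ for a factor $\lambda^{-1}$ by a single integration by parts, and ground the induction in the $|l|=0$ stationary-phase bound reduced to the explicit one-dimensional Fresnel integral. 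For part (b), the paper reuses the same conical partition and the coordinate-direction operator, while you use the standard radial non-stationary-phase vector field $L = (2i\lambda|y|^2)^{-1}\sum_j\epsilon_j y_j\partial_{y_j}$, the more common textbook device; both are valid because $g$ vanishes near the origin, so the coefficient singularity at $y=0$ is never reached. Your approach is leaner and avoids the auxiliary partition of unity and the $\epsilon$-splitting, at the cost of importing the exact Fresnel evaluation; the paper's is more robust in that it needs no explicit Gaussian-type formula and uses identical machinery for both parts. One point to tighten in your write-up: the base case of part (a) does not reduce to the 1D estimate simply ``by Fubini,'' since after integrating out $y_1$ the new amplitude $F_1(y') = \int_\RR e^{i\lambda\epsilon_1 y_1^2}\eta(y_1,y')\,dy_1$ is a $\lambda$-dependent function whose $y'$-derivatives must be tracked (by differentiating under the integral sign) in order to iterate the one-dimensional bound and accumulate the full $\lambda^{-n/2}$; you anticipate exactly this difficulty in your bookkeeping remark, and it is a fixable detail rather than a gap.
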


\begin{proof}
\begin{enumerate}[a.]
    \item Consider the cones 
    $$\Gamma_k = \left\{y \in \RR^{n}| |y_k|^2 \geq \frac{|y|^2}{2n}  \right\},$$
    and  $$\Gamma_k^{0} = \left\{y \in \RR^{n}| |y_k|^2 \geq \frac{|y|^2}{n}  \right\}.$$
    Since $\cup_{k=1}^{n} \Gamma_k^{0} = \RR^{n}$, there are functions $\{\Omega_k\}_{1 \leq k \leq n}$ such that each $\Omega_k$ is homogeneous of degree $0$, smooth away from the origin, $0 \leq \Omega_k \leq 1$ with $$\sum_{k=1}^{n} \Omega_k(x) = 1$$ for $x\neq 0$, and each $\Omega_k$ is supported in $\Gamma_k$. Then,
    \begin{equation*}
        \int_{\RR^{n}} e^{i \lambda Q_{m, n}(y)} y^{l} \eta(y) dy = \sum_{k=1}^{n} \int_{\Gamma_{k}} e^{i \lambda Q_{m, n}(y)} y^{l} \eta(y) \Omega_k(y) dy.
    \end{equation*}
    
     In the cone $\Gamma_k$, we will show that there exists $A_{l, k}>0$ such that 
     \begin{equation}
     \label{eq_int_QmylOmegak}
       \left| \int_{\Gamma_{k}} e^{i \lambda Q_{m, n}(y)} y^{l} \eta(y) \Omega_k(y) dy\right| \leq A_{l, k} \lambda^{-\frac{n+|l|}{2}}.  
     \end{equation}
     Summing over all $k$ yields (\ref{eq_int_Qmyl}). Let $\alpha \in C^{\infty}(\RR^n)$, such that $\alpha(y) = 1$ for $|y| \leq 1$, and $\alpha(y)=0$ for $|y| \geq 2$. Then, for $\epsilon>0$,
        \begin{equation*}
            \begin{split}
            & \int_{\Gamma_{k}} e^{i \lambda Q_{m, n}(y)} y^{l} \eta(y) \Omega_k(y) dy  \\
            = & \int_{\Gamma_{k}} e^{i \lambda Q_{m, n}(y)} y^{l} \eta(y) \Omega_k(y) \alpha(\epsilon^{-1} y) dy + \int_{\Gamma_{k}} e^{i \lambda Q_{m, n}(y)} y^{l} \eta(y) \Omega_k(y) [1-\alpha(\epsilon^{-1} y)] dy.
            \end{split}    
        \end{equation*}
        For the first integral,
        \begin{equation*}
            \left|\int_{\Gamma_{k}} e^{i \lambda Q_{m, n}(y)} y^{l} \eta(y) \Omega_k(y) \alpha(\epsilon^{-1} y) dy  \right| \lesssim \norm{\eta}_{L^{\infty}} \epsilon^{|l|+1}.
        \end{equation*}
        Let $N \in \N$. Using integration by parts $N$ times, the second integral can be written as
        \begin{equation}
        \label{eq_int_tDN}
             \int_{\Gamma_k} e^{i \lambda Q_{m, n}(y)} (^tD_k)^N\{ y^l \eta(y)  \Omega_k(y)  [1-\alpha(\epsilon^{-1} y)] \}dy
        \end{equation}
        with $^tD_k g = s_k(2 i\lambda)^{-1} \frac{\partial}{\partial y_k}\left(\frac{g}{y_k}\right)$ for a differentiable function $g$, and $s_k=-1$ if $k \leq m$, $s_k=1$ if $k \geq m+1$. Note that
        \begin{equation}
        \label{eq_tDkNg}
            (^tD_k)^N g = \lambda^{-N}\sum_{r=0}^{N}a^{(m, k)}_{N, r} y_k^{r-2N}\frac{\partial^r g}{\partial y_k^r}
        \end{equation} for $a^{(m, k)}_{N, r} \in \CC$. When we apply (\ref{eq_tDkNg}) and the product rule of the derivative to expand (\ref{eq_int_tDN}), we obtain a summation of terms where a term, ignoring the constant, is
        \begin{equation*}
                \lambda^{-N}\int_{\Gamma_k \cap B(0, \epsilon)^{c}} e^{i \lambda Q_{m, n}(y)} y_k^{r-2N} \left[\frac{\partial^{r_1}}{\partial y_k^{r_1}} y^{l} \right]  \left[\frac{\partial^{r_2}}{\partial y_k^{r_2}}\eta(y)\right]\left[\frac{\partial^{r_3}}{\partial y_k^{r_3}}\Omega_k(y)\right]\left[\frac{\partial^{r_4}}{\partial y_k^{r_4}} [1-\alpha(\epsilon^{-1} y)] \right]dy
        \end{equation*}
        for $r_1, r_2, r_3, r_4 \geq 0$, $r_1+r_2+r_3+r_4 = r \leq N$. We note that $\frac{\partial^{r_3} \Omega_k}{\partial y_k^{r_3}}$ is a homogeneous function of degree $-r_3$. When $|l|-N < -n$, the term above is bounded by a constant multiple of 
        \begin{equation*}
        \begin{split}
            & \lambda^{-N} \epsilon^{-r_4}\int_{\Gamma_k \cap B(0, \epsilon)^{c}} |y|^{r-2N+|l|-r_1-r_3} \norm{\frac{\partial^{r_2}\eta}{\partial y_k^{r_2}}}_{L^{\infty}} \norm{\frac{\partial^{r_4} \alpha}{\partial y_k^{r_4}}}_{L^{\infty}} dy \\
            \lesssim &   \lambda^{-N} \epsilon^{-r_4} \epsilon^{r-2N+|l|-r_1-r_3+n} \norm{\eta}_{C^{r_2}}\norm{\alpha}_{C^{r_4}}\\
            \lesssim & \lambda^{-N} \epsilon^{|l|-2N+r-r_1-r_3-r_4+n} \norm{\eta}_{C^{r_2}}\norm{\alpha}_{C^{r_4}}.
        \end{split}
        \end{equation*}
        Then we obtain (\ref{eq_int_QmylOmegak}) by setting $\epsilon = \lambda^{-\frac{1}{2}}$. $A_k$ depends on $\norm{\eta}_{C^{N}}$.
    \item The proof for b) is similar to that for a). We use the same cone $\Gamma_k$ and functions $\Omega_k$, $\alpha$. It suffices to show
    \begin{equation}
    \label{eq_QmgOmegak}
        \left|\int_{\Gamma_k} e^{i \lambda Q_{m, n}(y)} g(y) \Omega_k(y) dy  \right| \leq B_{N, k} \lambda^{-N}\
    \end{equation}
    for a $B_{N, k}>0$. If $2\epsilon < \delta$, 
     \begin{equation*}
             \int_{\Gamma_{k}} e^{i \lambda Q_{m, n}(y)} g(y) \Omega_k(y) dy 
            =  \int_{\Gamma_k \cap B(0, \epsilon)^{c}} e^{i \lambda Q_{m, n}(y)} g(y) \Omega_k(y) [1-\alpha(\epsilon^{-1} y)] dy.   
        \end{equation*}
    Then, we apply integration by parts $N$ times to obtain
    \begin{equation}
    \label{eq_int_QmtDngOmegak}
          \int_{\Gamma_k \cap B(0, \epsilon)^{c}} e^{i \lambda Q_{m, n}(y)} (^tD)^N \left\{g(y) \Omega_k(y) [1-\alpha(\epsilon^{-1} y)] \right\}dy.
    \end{equation}
    After we apply (\ref{eq_tDkNg}) and the product rule of the derivative to expand (\ref{eq_int_QmtDngOmegak}), we obtain a summation of terms where a term, ignoring the constant, is
        \begin{equation*}
                \lambda^{-N}\int_{\Gamma_k} e^{i \lambda Q_{m, n}(y)} y_k^{r-2N}   \left[\frac{\partial^{r_1}}{\partial y_k^{r_1}}g(y)\right]\left[\frac{\partial^{r_2}}{\partial y_k^{r_2}}\Omega_k(y)\right]\left[\frac{\partial^{r_3}}{\partial y_k^{r_3}} [1-\alpha(\epsilon^{-1} y)] \right]dy
        \end{equation*}
        for $r_1, r_2, r_3 \geq 0$, $r_1+r_2+r_3 = r \leq N$. We note that $\frac{\partial^{r_2} \Omega_k}{\partial y_k^{r_2}}$ is a homogeneous function of degree $-r_2$. When $-N < -n$, the term above is bounded by a constant multiple of 
        \begin{equation*}
        \begin{split}
            & \lambda^{-N} \epsilon^{-r_3}\int_{\Gamma_k \cap B(0, \epsilon)^{c}} |y|^{r-2N-r_2} \norm{\frac{\partial^{r_1}g}{\partial y_k^{r_1}}}_{L^{\infty}} \norm{\frac{\partial^{r_3} \alpha}{\partial y_k^{r_3}}}_{L^{\infty}} dy \\
            \lesssim &   \lambda^{-N} \epsilon^{-r_3} \epsilon^{r-2N-r_2+n} \norm{g}_{C^{r_1}}\norm{\alpha}_{C^{r_3}}\\
            \lesssim & \lambda^{-N} \epsilon^{r-2N-r_2-r_3+n} \norm{g}_{C^{r_1}}\norm{\alpha}_{C^{r_3}}.
        \end{split}
        \end{equation*}
        Then, we obtain (\ref{eq_QmgOmegak}) by setting $\epsilon = \frac{\delta}{3}$. \qedhere
    \end{enumerate}
\end{proof}

\begin{theorem}\cite[Chapter 8, Proposition 6]{Stein1993}
\label{t_asym}
    \begin{enumerate}[a.]
        \item Let $Q_{m, n}(y)$ be defined as in (\ref{eq_Qm}). Let
        $$I_m(\lambda; \psi) : = \int_{\RR^n} e^{i \lambda Q_{m, n}(y)} \psi(y) dy,$$ where $\psi$ is supported in a small neighborhood of $0$, and $\lambda_0 > 1$. For $\lambda \geq \lambda_0$,
        \begin{equation}
        \label{eq_Im_main_d}
            \left|I_m(\lambda; \psi) - (-1)^{\frac{n-m}{2}}(\pi i)^{\frac{n}{2}} \psi(0) \lambda^{-\frac{n}{2}}\right| \leq D\lambda^{-\frac{n+1}{2}},
        \end{equation}
        where $D$ depends on $\lambda_0$, the size of $\text{spt } \psi$, and $\norm{\psi}_{C^{n+3}}$.
        
        \item Let $\phi, \psi \in C^{\infty}(\RR^{n})$. Suppose that $\phi$ has only one non-degnerate critical point $z_0$ in $\text{spt } \psi$ and $\phi(z_0)=0$. Let
        \begin{equation*}
            I(\lambda; \phi, \psi) : = \int_{\RR^n} e^{i \lambda \phi(z)} \psi(z) dz,
        \end{equation*}
        $\lambda_0 > 1$, and $c=\det \Delta \phi(z_0)$. For $\lambda \geq \lambda_0$, there exist $0\leq m \leq n$ and $D=D(\lambda_0, \phi, \psi)>0$, such that
        \begin{equation*}
            \left|I(\lambda; \phi, \psi) - (-1)^{\frac{n-m}{2}}(2\pi i)^{\frac{n}{2}} c^{-\frac{1}{2}} \psi(z_0) \lambda^{-\frac{n}{2}}\right| \leq D(\lambda_0, \phi, \psi)|\lambda|^{-\frac{n+1}{2}}.
        \end{equation*}
        The error $D(\lambda_0, \phi, \psi)$ depends on $\lambda_0$, $c$, the size of $\text{spt } \psi$, and $\norm{\phi}_{C^{n+6}}$, $\norm{\psi}_{C^{n+3}}$.
    \end{enumerate}
\end{theorem}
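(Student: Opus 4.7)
The plan is to prove part (a) first via a Taylor expansion of $\psi$ combined with the integral formulas from the preceding lemma, and then deduce part (b) from part (a) by applying Morse's Lemma.

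For part (a), I would first reduce to a localized integrand. Pick a cutoff $\alpha\in C_0^\infty(\RR^n)$ equal to $1$ on $B(0,\delta/2)$ and supported in $B(0,\delta)$, and split $\psi = \psi\alpha + \psi(1-\alpha)$; the second piece vanishes on $B(0,\delta/2)$, so (\ref{eq_int_Qmg}) gives $O(\lambda^{-N})$. For the localized piece, Taylor-expand to order $n+1$:
$$\psi(y)\alpha(y) = \sum_{|l|\le n}\frac{(\partial^l\psi)(0)}{l!}\,y^l\alpha(y) + R(y),$$
with $R(y)=O(|y|^{n+1})$ supported in $B(0,\delta)$. Each monomial with $|l|\ge 1$ contributes at most $O(\lambda^{-(n+|l|)/2}) = O(\lambda^{-(n+1)/2})$ by (\ref{eq_int_Qmyl}), and the remainder is handled by writing it as $\sum_{|l|=n+1} y^l g_l(y)$ with smooth $g_l$ and reapplying (\ref{eq_int_Qmyl}). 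Only $\psi(0)\int e^{i\lambda Q_{m,n}(y)}\alpha(y)\,dy$ remains; combined with (\ref{eq_int_Qmg}) this equals $\psi(0)\prod_{j=1}^m\int e^{i\lambda y_j^2}dy_j\cdot\prod_{j=m+1}^n\int e^{-i\lambda y_j^2}dy_j$ up to $O(\lambda^{-N})$, and the Fresnel evaluations $\int e^{\pm i\lambda t^2}dt = (\pm i\pi/\lambda)^{1/2}$ combine to give $(-1)^{(n-m)/2}(\pi i)^{n/2}\lambda^{-n/2}$, establishing (\ref{eq_Im_main_d}).

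For part (b), apply Lemma \ref{lem_morse} near $z_0$ to produce a diffeomorphism $\tau$ with $\tau(z_0)=0$ and $\phi = Q_{m,n}\circ\tau$, where $m$ is the number of positive eigenvalues of $\Delta\phi(z_0)$. Changing variables $y=\tau(z)$ gives
$$I(\lambda;\phi,\psi) = \int_{\RR^n} e^{i\lambda Q_{m,n}(y)}\,\tilde\psi(y)\,dy,\qquad \tilde\psi(y):=\psi(\tau^{-1}(y))\,|\det{\bf J}\tau^{-1}(y)|.$$
Since $\tilde\psi\in C_0^\infty$ is supported near $0$, part (a) yields main term $(-1)^{(n-m)/2}(\pi i)^{n/2}\tilde\psi(0)\lambda^{-n/2}$. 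The determinant identity (\ref{eq_JDJD}) combined with $\det\Delta Q_{m,n}(0) = 2^n(-1)^{n-m}$ yields $(\det{\bf J}\tau(z_0))^2 = 2^{-n}(-1)^{n-m}c$, so $|\det{\bf J}\tau(z_0)| = 2^{-n/2}|c|^{1/2}$ and $\tilde\psi(0) = 2^{n/2}|c|^{-1/2}\psi(z_0)$. Substitution produces the stated main term $(-1)^{(n-m)/2}(2\pi i)^{n/2}c^{-1/2}\psi(z_0)\lambda^{-n/2}$.

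The main technical obstacle is tracking the error constants explicitly. In part (a), $D$ must depend only on $\lambda_0$, $|\text{spt}\,\psi|$, and $\|\psi\|_{C^{n+3}}$; this requires bookkeeping through the $N$-fold integration-by-parts in (\ref{eq_tDkNg}) and the Taylor-remainder estimate, with $N$ chosen so $(n+|l|)/2 \ge (n+1)/2$. In part (b), the extra three derivatives in $\|\phi\|_{C^{n+6}}$ come from applying Morse's Lemma: the diffeomorphism $\tau$ has its first $n+3$ derivatives controlled by derivatives of $\phi$ up to order roughly $n+5$ (via the iterative construction in (\ref{mor_tau_ld1})--(\ref{mor_taur})), and composing with $\tau^{-1}$ in $\tilde\psi$ absorbs one further derivative. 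Finally, the branch of $c^{-1/2}$ is fixed by the sign factor $(-1)^{(n-m)/2}$, which encodes the signature of $\Delta\phi(z_0)$.
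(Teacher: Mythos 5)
Your overall strategy for part (a) is a natural one, but there is a genuine gap in the step where you evaluate the main term $\psi(0)\int e^{i\lambda Q_{m,n}(y)}\alpha(y)\,dy$. You assert that this equals $\psi(0)\prod_{j=1}^m\int e^{i\lambda y_j^2}\,dy_j\cdot\prod_{j=m+1}^n\int e^{-i\lambda y_j^2}\,dy_j$ up to $O(\lambda^{-N})$ ``combined with (\ref{eq_int_Qmg}).'' That comparison would require applying (\ref{eq_int_Qmg}) to the difference $1-\alpha$, but $1-\alpha$ is identically $1$ outside $B(0,\delta)$, hence is not in $\mathcal{S}(\RR^n)$, and (\ref{eq_int_Qmg}) does not apply to it; the integral $\int e^{i\lambda Q_{m,n}(y)}(1-\alpha(y))\,dy$ is not even absolutely convergent. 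In other words, the passage from the compactly supported integral to the full (only conditionally convergent) Fresnel product needs a regularization step that (\ref{eq_int_Qmg}) alone does not supply.

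The paper sidesteps this by inserting a Gaussian weight: it writes $\psi(y)=e^{-|y|^2}\bigl(e^{|y|^2}\psi(y)\bigr)$ and expands $e^{|y|^2}\psi(y)=\psi(0)+\sum_j y_j R_j(y)$. The reference integral is then $\int e^{i\lambda Q_{m,n}(y)}e^{-|y|^2}\,dy$, which converges absolutely, can be computed in closed form, and has a power-series error bound from complex analysis (equation (\ref{eq_int_Qmny_main_d})). The tail $e^{-|y|^2}(1-\bar\psi(y))$ is now genuinely Schwartz and vanishes near $0$, so (\ref{eq_int_Qmg}) applies cleanly. If you want to keep your Taylor-expansion route, you could repair it by comparing $\alpha(y)$ to $e^{-|y|^2}$ instead of to $1$: their difference vanishes at $0$, so it factors as $\sum_j y_j g_j(y)$ with $g_j\in C_0^\infty$, and (\ref{eq_int_Qmyl}) gives the needed $O(\lambda^{-(n+1)/2})$; but as written your argument is incomplete. (A smaller point: the Taylor remainder $R(y)=\sum_{|l|=n+1}y^l g_l(y)$ has $g_l$ smooth but not compactly supported, so an additional cutoff is needed before invoking (\ref{eq_int_Qmyl}).) Your part (b) matches the paper's reduction to part (a) via Morse's Lemma and the Jacobian identity (\ref{eq_JDJD}), and your derivative bookkeeping is consistent with the paper's discussion following Lemma \ref{lem_morse}.
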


\begin{proof}

    \begin{enumerate}[a.]
        \item Step 1: We have
        \begin{equation*}
            \int_{-\infty}^{\infty} e^{i \lambda x^2} e^{-x^2} dx = (1-i\lambda)^{-\frac{1}{2}} \int_{-\infty}^{\infty} e^{-x^2} dx,
        \end{equation*}
        and $\int_{-\infty}^{\infty} e^{-x^2} dx = \sqrt{\pi}$. We can fix the principal branch of $z^{-\frac{1}{2}}$ in the complex plane slit along the negative real-axis. Therefore,
        \begin{equation}
        \label{eq_int_eny2}
        \begin{split}
            \int_{\RR^{n}} e^{i \lambda Q_{m, n}(y)} e^{-|y|^2} dy 
            = & \left(\prod_{j=1}^{m}\int_{\RR} e^{i \lambda y_j^2} e^{-y_j^2} dy_j\right)\left(\prod_{j=m+1}^{n}\int_{\RR} e^{-i \lambda y_j^2} e^{-y_j^2} dy_j\right) \\
            = & \pi^{\frac{n}{2}}(1-i\lambda)^{-\frac{m}{2}}(1+i\lambda)^{-\frac{n-m}{2}} \\
            = & \pi^{\frac{n}{2}} \lambda^{-\frac{n}{2}}(\lambda^{-1}-i)^{-\frac{m}{2}}(\lambda^{-1}+i)^{-\frac{n-m}{2}}.
        \end{split}
        \end{equation}
        
        We write the power series expansion of $f_m(w) = (w-i)^{-\frac{m}{2}}(w+i)^{-\frac{n-m}{2}}$ at $0$ as
        $\sum_{k=0}^{\infty} a_k w^k,$
        where $a_0 = (-1)^{\frac{n-m}{2}} i^{\frac{n}{2}}$. Let $\gamma$ be a line segment from $0$ to $\lambda^{-1}$ and $b_m =  \sup_{|w| = \lambda_0^{-1}} |f_m'(w)|$. Then for $\lambda \geq \lambda_0 > 1$, the error of approximation by the constant term is bounded by
        \begin{equation}
        \label{eq_fm_a0_d}
        \begin{split}
            |f_m(\lambda^{-1}) -  a_0| \leq & \left|\int_{\gamma} f_m'(w) dw \right|\\
            \leq & |\gamma| \sup_{|w| = \lambda^{-1}} |f_m'(w)| \\
            \leq & b_m \lambda^{-1}.
        \end{split} 
        \end{equation}
        The integral form of the Taylor remainder and the maximum modulus principle are used. Putting everything together, for $\lambda \geq \lambda_0$,
        \begin{equation}
        \label{eq_int_Qmny_main_d}
            \begin{aligned}
                 & \left|\int_{\RR^{n}} e^{i \lambda Q_{m, n}(y)} e^{-|y|^2} dy - a_0 \pi^{\frac{n}{2}} \lambda^{-\frac{n}{2}} \right| \\
             = & \left| \pi^{\frac{n}{2}} \lambda^{-\frac{n}{2}}(\lambda^{-1}-i)^{-\frac{m}{2}}(\lambda^{-1}+i)^{-\frac{n-m}{2}}  -  a_0 \pi^{\frac{n}{2}} \lambda^{-\frac{n}{2}}\right| & \text{ by (\ref{eq_int_eny2})}\\
             = & \pi^{\frac{n}{2}} \lambda^{-\frac{n}{2}} \left|  (\lambda^{-1}-i)^{-\frac{m}{2}}(\lambda^{-1}+i)^{-\frac{n-m}{2}}  -  a_0 \right| \\
            \leq & b_m \pi^{\frac{n}{2}} \lambda^{-\frac{n+2}{2}}  & \text{ by (\ref{eq_fm_a0_d})}.
            \end{aligned}
        \end{equation}

    Step 2: To obtain (\ref{eq_Im_main_d}), we write $e^{|y|^2}\psi(y) = \psi(0) + \sum_{y=1}^{n} y_j R_j(y)$, where $R_j \in C_0^{\infty}(\RR^n)$. Let $\bar{\psi} \in C^{\infty}_0(\RR^n)$, with $\bar{\psi}(y) = 1$ on the support of $\psi$. To apply the results from the previous steps, we write
    \begin{equation*}
        \begin{split}
            & \int_{\RR^{n}} e^{i \lambda Q_{m, n}(y)} \psi(y) dy \\ = & \int e^{i \lambda Q_{m, n}(y)} e^{-|y|^2} [e^{|y|^2} \psi(y)] \bar{\psi}(y) dy \\
            = & \int_{\RR^{n}} e^{i \lambda Q_{m, n}(y)} e^{-|y|^2} \left[\psi(0)+\sum_{j=1}^{n} y_j R_j(y)\right] \bar{\psi}(y) dy \\
            = & \int e^{i \lambda Q_{m, n}(y)} e^{-|y|^2} \psi(0) \bar{\psi}(y) dy + \sum_{j=1}^{n}\int e^{i \lambda Q_{m, n}(y)} y_j e^{-|y|^2} R_j(y) \bar{\psi}(y) dy \\
            = & \psi(0) \int e^{i \lambda Q_{m, n}(y)} e^{-|y|^2}  dy+ \psi(0) \int e^{i \lambda Q_{m, n}(y)} e^{-|y|^2}  [1- \bar{\psi}(y) ]dy \\
            & + \sum_{j=1}^{n} \int e^{i \lambda Q_{m, n}(y)} y_j e^{-|y|^2} R_j(y) \bar{\psi}(y) dy.
        \end{split}
    \end{equation*}
    
    Then we obtain (\ref{eq_Im_main_d}) by applying (\ref{eq_int_Qmny_main_d}) to the first integral, (\ref{eq_int_Qmg}) to the second integral, and (\ref{eq_int_Qmyl}) to each term in the summation. We note that the error $D$ depends on $\lambda_0$, the size of the support of $\psi$, and $\norm{\psi}_{C^{n+3}}$.
    \item 
     Let $V$ be a neighborhood of $z_0$ obtained from applying Morse's lemma (Lemma \ref{lem_morse}) to $\phi$, and we write $\psi = \psi_1+\psi_2$, where $\text{spt }\psi_1 \subset V \cap \text{spt }\psi$, and $\text{spt }\psi_2$ does not contain $z_0$. Then, we write
     $$ \int_{\RR^n} e^{i \lambda \phi(z)} \psi(z) dz =  \int_{\RR^n} e^{i \lambda \phi(z)} \psi_1(z) dz  +  \int_{\RR^n} e^{i \lambda \phi(z)} \psi_2(z) dz.$$
     \begin{itemize}
         \item By Morse's lemma (Lemma \ref{lem_morse}),
    there exists a diffeomorphism $\tau: V \to U$, where $V$ is a neighborhood of $z_0$ in the $z$-space, and $U$ is a neighborhood of $0$ in the $y$-space, such that $\tau(z_0)=0$ and 
    \begin{equation*}
        \phi(z) = Q_{m, n}(\tau(z)).
    \end{equation*} 
    By a change of variables $z=\tau^{-1}(y)$,
    \begin{equation*}
        \begin{split}
            \int_{\RR^n} e^{i \lambda \phi(z)} \psi_1(z) dz = & \int e^{i \lambda Q_{m, n}(y)}\psi_1(\tau^{-1}(y)) |\det {\bf J  }\tau^{-1}(y)|dy \\
            = & I_m(\lambda; (\psi_1 \circ \tau^{-1})\cdot |\det {\bf J  }\tau^{-1}|).
        \end{split}
    \end{equation*}
    We can apply part a) to the integral above. $D$ depends on $\lambda_0$, the size of support of $\psi_1 \circ \tau^{-1}$, and the $C^{n+3}$ norms of $(\psi_1 \circ \tau^{-1})\cdot |\det {\bf J  }\tau^{-1}|$. We note that the $L^{\infty}$ norm of the $k^{\text{th}}$ partial derivative ($0 \leq k \leq n+3$) of $(\psi_1 \circ \tau^{-1})\cdot |\det {\bf J  }\tau^{-1}|$ can be bounded by $C^{n+3}$ norms of $\psi$, $|\det {\bf J  }\tau|$, and $\inf\{|\det {\bf J  }\tau(z)| z \in \text{spt} \psi  \}$. We note that $|\det {\bf J  }\tau(0)|^2=\frac{c}{2^n}$ by (\ref{eq_JDJD}), and we can bound the error in terms of $\phi$ with the fact that $\norm{\det {\bf J  }\tau}_{C^{n+3}(V)}$  and $\inf\{|\det {\bf J  }\tau(z)| z \in \text{spt} \psi  \}$ depend on $\norm{\phi}_{C^{n+5}(V)}$, $c$, and the size of $\text{spt } \psi_1$ from the discussion of Morse's lemma in Section \ref{sec62}.
    \item For the second integral, we apply Proposition \ref{p_local}. \qedhere
     \end{itemize}  
    \end{enumerate}
\end{proof}

\bibliographystyle{plain} 
\bibliography{refs}

\Addresses

\end{document}